 \newtheorem{thm}{Theorem}[section]
 \newtheorem{cor}[thm]{Corollary}
 \newtheorem{prop}[thm]{Proposition}
 \theoremstyle{definition}
 \newtheorem{defn}[thm]{Definition}
 \theoremstyle{remark}
 \newtheorem{rem}[thm]{Remark}
 \newtheorem{ex}{Example}
 \numberwithin{equation}{section}
\def\R{\mathbb R}
\newcommand{\plans}{\mathcal{A}_{3,2} }
\newcommand{\rectes}{\mathcal{A}_{3,1} }
\newcommand{\grass}{\mathrm{Gr}(3,2)}
\newcommand{\euc}{\mathbb{E}^{3}}
\begin{document}

\title[Integral Geometry of pairs of planes] {Integral Geometry of pairs of planes}

\author[J.\ Cuf\'{\i}, E.\ Gallego and A.\ Revent\'os]{Juli\`a Cuf\'{\i}, Eduardo Gallego and Agust\'{\i} Revent\'os}

\address{%
Departament de Matem\`atiques \\
Universitat Aut\`{o}noma de Barcelona\\
08193 Bellaterra, Barcelona\\
Catalonia}
\email{jcufi@mat.uab.cat, egallego@mat.uab.cat, agusti@mat.uab.cat}

\thanks{The authors were partially supported by grants 2017SGR358, 2017SGR1725 (Generalitat de Catalunya) and PGC2018-095998-B-100 (FEDER/MICINN)}

\subjclass{Primary 52A15, Secondary 53C65.}
\keywords{Invariant measures, convex set, visual angle, constant width.}

\begin{abstract}
We deal with integrals of invariant measures of pairs of planes in euclidean space $\euc$ as considered by Hug and Schneider.~In this paper we   express some of these integrals in terms of functions of the visual angle of a convex set.~As a consequence of our results we evaluate  the deficit in a Crofton-type inequality due to Blashcke.
\end{abstract}

\maketitle

\section{Introducction}
%
%
%
%
%
The main goal of this paper  is to study integrals of invariant measures with respect to euclidean motions in the euclidean space $\euc$, extended to 
the set of pairs of planes meeting a compact convex set.~To carry out this objective we express these integrals in terms of functions of the dihedral visual angle  of the convex set from a line 
 and integrate  them with respect to an invariant   measure in the space of lines.

The first known formula involving the visual angle of a convex set in the euclidean plane $\mathbb{E}^{2}$
is Crofton's formula given in \cite{Crofton}.  Other results in this direction were  obtained by Hurwitz (\cite{Hurwitz1902}), Masotti    (\cite{masotti2}) and others, in which the use of  Fourier series is  the main tool. Recently the authors (\cite{Cufi2019}, \cite{mathematika}) have dealt with a general type of integral formulas from the point of view of Integral Geometry. 

When trying to generalize these results to higher dimensions the role played by Fourier series in the case of the plane has to be replaced by the use of spherical harmonics.  In this sense Theorem \ref{2909b} plays an important role.~After stating and proving this result in dimension $3$ we were aware of the paper by Hug and Schneider~\cite{hugsch} where a more general result  in any dimension is proved.  In fact the present paper can be considered in some sense as a complement to \cite{hugsch}, the novelty been the introduction of the dihedral visual angle. 

In Proposition \ref{mesuresinv} we give a characterization of invariant measures in the space 
of pairs of planes. These will be the kind of measures considered along the  paper.  

In section 4, using Hug-Schneider's Theorem \cite[p. 349]{hugsch}, we give an expression for the integral of the sinus of 
the dihedral visual angle of pairs of planes meeting a given compact convex set $K$ in terms of geometrical properties of $K$, see formula~\eqref{lambdessinus}; also  we characterize the compact convex sets of constant width in terms of invariant measures given by Legendre polynomials in Proposition \ref{Kampconst}.

In section 5  we assign to any invariant measure on the space of pairs of planes an appropriate function of the dihedral visual angle of a given convex set.~The integral of this function with respect to the measure on the space of lines 
gives the integral of the above  measure  extended to those planes meeting the convex set. This result is given in Theorem 
\ref{1310c}. Then we relate this result to Blaschke's work \cite{Blaschke2007}. If $K$ is a convex set of mean curvature  $M$ and area of its boundary $F$,  Blaschke proves the formula 
$$\int_{G\cap K=\emptyset}(\omega^{2}-\sin^{2}\omega)\,dG=2M^{2}-\frac{\pi^{3}F}{2}$$
  where $\omega=\omega(G)$ is the dihedral visual angle of $K$ from the line  $G$.
This equality reveals the significance of the function of the visual angle $\omega^{2}-\sin^{2}\omega$. One can ask what role does it play the function $\omega-\sin\omega$; this function, interpreting $\omega$ as the visual angle in the plane  is significant thanks to Crofton formula. In \cite{Blaschke2007} the inequality 
$$\int_{G\cap K=\emptyset}(\omega-\sin\omega)\,dG\geq \frac{\pi}{4}(M^{2}-2\pi F)$$
  is stablished. Here we provide a simple formulation of the deficit in this inequality by means of Theorem \ref{1903} where it is proved that
  $$\displaystyle \int_{G\cap K=\emptyset}(\omega-\sin\omega)dG={\pi\over 4}(M^{2}-2\pi F)+\pi\sum_{n=1}^{\infty}{\Gamma(n+1/2)^{2}\over \Gamma(n+1)^{2}}\lVert \pi_{2n}(p)\rVert^{2},$$
  whith $\pi_{2n}(p)$ the projection of the support function $p$ of $K$ on the vector space of spherical harmonics of degree $2n$.
  
In section 6 we give a formulation of Theorem \ref{1310c} in terms of Fourier series of the function of the visual angle assigned to an  invariant measure. As a consequence one obtains that the integral of any invariant measure in the space of pairs of planes extended to those meeting a compact convex set $K$ is an infinite linear combination of integrals of
 even powers of the sine of the visual angle of $K$. From  this we  exhibit in Proposition \ref{2403b} a simple family of polynomial functions that are in some sense a basis for the integrals considered in Theorem \ref{2909b}. In fact   every invariant integral can be written as an infinite linear combination of integrals with respect to the  invariant measures given by those polynomial functions. 

In section 7, motivated by the role played by the integrals of even powers of the sine of the visual angle of a compact set $K$ we compute these integrals in terms of the expansion in spherical harmonics  of the support function of $K$.
\section{Preliminaries}
\subsection*{Support function}

The  \emph{support function} of a compact convex set $K$  in the euclidean space $ \euc$ is defined as 
$$
p_{K}(u)=\mathrm{sup}\{\langle x,u\rangle : x\in K\}
$$
for $u$ belonging to the unit sphere $S^{2}$.
If the origin $O$ of $\euc$ is an interior point of $K$ then the number $p_{K}(u)$ is the distance from the origin to the support plane  of $K$ in the direction given by $u$.~The \emph{width} $w$ of $K$ in a direction $u\in S^{2}$ is $w(u)=p_{K}(u)+p_{K}(-u).$ 


From now on we will write $p(u)=p_{K}(u)$ and will assume that $p(u)$ is of class~$C^{2}$; in this case we shall say that the boundary of $K$, $\partial K$, is of class ${\mathcal C}^{2}$. 

\subsection*{Spherical harmonics}
Let us recall that a spherical harmonic of order $n$ on the unit sphere $S^{2}$ is the restriction to $S^{2}$ of an harmonic homogeneous polynomial of degree $n$. It is known that
every continuous function on $S^{2}$ can be uniformly approximated by finite sums of spherical harmonics (see for instance \cite{groemer}).

More precisely, the function  $p(u)$ can be written in terms of spherical harmonics as 
\begin{eqnarray}\label{1607}p(u)
=\sum_{n=0}^{\infty}\pi_{n}(p)(u),\end{eqnarray}
where $\pi_{n}(p)$  is the projection of the support function  $p$ on the vector space of spherical harmonics of degree $n$.~An orthogonal  basis of this space is given in terms of the longitude $\theta$ and the colatitude $\varphi$ in $S^{2}$ by 
\begin{equation*}
\{\cos(j\theta)(\sin\varphi)^{j}\,P_{n}^{(j)}(\cos\varphi), \quad
\sin(j\theta)(\sin\varphi)^{j}\,P_{n}^{(j)}(\cos\varphi):\quad 0\leq j\leq n\}
\end{equation*}
where   
$P_{n}^{(j)}$ denotes the $j$th derivative of the $n$th Legendre polynomial $P_{n}$, see \cite{groemer}.

It  can be seen  that $\pi_{0}(p)= \mathcal{W}/2=M/4\pi$
where $\mathcal{W}=1/4\pi\int_{S^{2}}w(u)	\, du$ is the {\em mean width} of $K$, and $M$ is the {\em mean curvature} of $K$ (cf. \cite{groemer}). 
It is clear that $\pi_{0}(p)$ is invariant under euclidean motions and that $\pi_{1}(p)$ is not.  It is known that $\pi_{n}(p)$ is also invariant for every $n\not =1$  (cf. \cite{Schneider2013}).

As $w(u)=p(u)+p(-u),$ one can easily check that  $K$ has constant width if and only if $\pi_{n}(p)=0$ for $n\neq 0$ even.

%
%
%
\medskip	

We recall now the Funk-Hecke theorem which gives the value of the integral over the sphere of a given function multiplied  by a spherical harmonic.~We restrict ourselves to the case of dimension $3$.
\begin{thm}[Funk-Hecke]\label{FH}
If $F:[-1,1]\longrightarrow \R$ is a bounded measurable function and $Y_{n}$ is a spherical harmonic of order $n$, then
$$\int_{S^{2}}F(\langle u,v\rangle)Y_{n}(v)d\sigma(v)=\lambda_{n}Y_{n}(u), \quad u\in S^{2}$$
with
$$\lambda_{n}=2\pi \int_{-1}^{1}F(t)P_{n}(t)\,dt,$$
where $P_{n}$ is the Legendre polynomial of degree $n$.
\end{thm}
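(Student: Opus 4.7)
The plan is to reduce the identity to a computation on the axis by exploiting rotation invariance. For a bounded measurable $F:[-1,1]\to\R$, define the integral operator
$$T_F(f)(u)=\int_{S^{2}}F(\langle u,v\rangle)\,f(v)\,d\sigma(v),\qquad u\in S^{2}.$$
First I would verify that $T_F$ is $SO(3)$-equivariant: for $\rho\in SO(3)$ the substitution $v\mapsto \rho^{-1}v$ together with $\langle \rho u,\rho v\rangle=\langle u,v\rangle$ and the rotation invariance of $d\sigma$ gives $T_F(f\circ \rho)=T_F(f)\circ \rho$.

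Next I would use the decomposition $L^{2}(S^{2})=\bigoplus_{n\ge 0}\mathcal H_{n}$ into spaces of spherical harmonics, which is the decomposition of $L^{2}(S^{2})$ into pairwise inequivalent irreducible $SO(3)$-representations. By equivariance, $T_F$ preserves each summand $\mathcal H_{n}$, and then Schur's lemma forces $T_F|_{\mathcal H_{n}}=\lambda_{n}\,\mathrm{Id}$ for some scalar $\lambda_{n}$ depending only on $F$ and $n$. In particular $T_F(Y_n)=\lambda_n Y_n$ for every spherical harmonic $Y_n$ of order $n$, which is the functional form asserted in the theorem.

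To evaluate $\lambda_n$, I would test the identity on the \emph{zonal} spherical harmonic $v\mapsto P_{n}(\langle e_{3},v\rangle)$, which lies in $\mathcal H_{n}$ and equals $P_{n}(1)=1$ at $e_{3}$. Applying $T_F$ and evaluating at $u=e_{3}$, then using spherical coordinates with polar axis $e_3$ (so that $\langle e_3,v\rangle=\cos\varphi$),
$$\lambda_{n}=T_F(P_n(\langle e_3,\cdot\rangle))(e_3)=\int_{0}^{2\pi}\!\!\int_{0}^{\pi}F(\cos\varphi)P_{n}(\cos\varphi)\sin\varphi\,d\varphi\,d\theta=2\pi\int_{-1}^{1}F(t)P_{n}(t)\,dt,$$
which is the claimed formula.

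The main obstacle is the representation-theoretic step, namely justifying that $T_F$ restricts to a scalar on each $\mathcal H_n$; one must either invoke irreducibility of $\mathcal H_n$ and Schur's lemma, or replace it by an elementary argument: for an arbitrary $u\in S^2$ pick $\rho\in SO(3)$ with $\rho e_3=u$, apply equivariance to $Y_n$ versus $Y_n\circ\rho$, and use that $\mathcal H_n$ is spanned by rotates of the zonal harmonic so that the ratio $T_F(Y_n)(u)/Y_n(u)$ is forced to be constant. Either route leads to the same explicit $\lambda_n$ above.
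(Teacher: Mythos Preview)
The paper does not prove this statement; it is the classical Funk--Hecke theorem, merely recalled in the Preliminaries for later use, with no proof supplied. Your argument is correct and is the standard representation-theoretic proof: the $SO(3)$-equivariance of $T_F$ together with the pairwise inequivalence and irreducibility of the spaces $\mathcal H_n$ force $T_F|_{\mathcal H_n}$ to be a scalar, and evaluating on the zonal harmonic $v\mapsto P_n(\langle e_3,v\rangle)$ at the pole gives the stated formula for $\lambda_n$.

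One small technical remark: when you invoke Schur's lemma over $\R$ you need either that each $\mathcal H_n$ is absolutely irreducible (which it is, since its complexification remains irreducible under $SO(3)$), or you can bypass the issue by observing that $T_F$ is self-adjoint (the kernel $F(\langle u,v\rangle)$ is symmetric in $u,v$) and hence diagonalizable with real eigenvalues, after which equivariance and irreducibility over $\R$ already suffice. Also, in your alternative ``elementary'' route, the phrase ``the ratio $T_F(Y_n)(u)/Y_n(u)$ is forced to be constant'' is heuristic since $Y_n$ may vanish; the clean version is to show directly that $T_F$ sends each rotate of the zonal harmonic to $\lambda_n$ times itself (the computation you already did, transported by a rotation), and then extend by linearity since those rotates span $\mathcal H_n$.
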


\subsection*{Measures in the space of planes}
The space of planes $ \mathcal {A} _ {3,2} $ in $ \euc$ is a homogeneous space of the group of  isometries  of $\euc $. It can also be considered  as a line bundle 
$ \pi:\plans{\longrightarrow} \grass $ 
where $ \grass $ is the Grassmannian of planes through the origin in $\euc$ and $ \pi (E) $ is the plane parallel  to  $ E $  through the origin. The fiber on $E_{0}\in\grass$ is identified with $\langle E_{0}\rangle ^ {\perp}$.
Each plane $ E \in \plans $ is then uniquely determined by the pair $ (\pi (E), E \cap \langle \pi (E) \rangle ^ {\perp}). $ Every pair $ (E_ {0}, p) \in \grass \times \R ^ {3} $ determines an element $ E_ {0} + p \in \plans $.
\smallskip	

We shall also consider the space of affine lines $\rectes$ in $\euc$; it is a vector bundle $\pi:\rectes \longrightarrow \mathrm{Gr(3,1)}$ where $ \mathrm{Gr}(3,1) $ is the Grassmannian of lines through the origin and every affine line $G\subset \euc$ can be identified with $(\pi(G), G\cap \langle \pi(G)\rangle^{\perp})$ (see for instance \cite{Klain1997}).

Both the isometry group  of $ \euc $ and the isotropy group  of a fixed plane $ E \in \plans $ are unimodular groups; so the Haar measure of the group  of isometries is projected into a isometry-invariant measure $m$ on $ \mathcal {A} _ {3,2} $. 

For a measurable set $ B \subset \plans $ we consider 
$$
m(B)=\int_{\plans}\chi_{B}(E)dE:=\int_{\grass}\left(\int_{E_{0}^{\perp}}\chi_{B}(E_{0}+p)dp \right) d\nu
$$ where $ \chi_ {B} $ is the characteristic function of $ B $, $ dp $ denotes the ordinary Lebesgue measure on $ E_ {0} ^ {\perp} $ and $ d \nu $ a  normalized isometry-invariant measure  on $ \grass $ such that $ \nu (\grass) = 2 \pi $.

More generally, if $ f: \plans \rightarrow \R $ and $ \bar f: \grass \times \R ^ {3} \rightarrow \R $ are related  by $ \bar f (E_ {0}, p) = f (E_ {0} + p) $ we have
$$
\int _ {\plans} f (E)\, dE: = \int _ {\grass} \left (\int_ {E_ {0} ^ {\perp}} \bar f (E_ {0}, p) dp \right ) d \nu.
$$
Notice that the only measures on $ \plans $ invariant under isometries   are those of the form $ f (E) dE $ with $ f $ a constant function.

In a similar way one can  define a normalized isometry-invariant measure on $\rectes$ that will be denoted by $dG.$
\section{Invariant measures in the space of ordered pairs of planes}
We consider measures in the space $ \plans \times \plans $ of pairs of planes in $ \euc $   of the form $m_{\tilde{f}}:= \tilde f (E_ {1}, E_ {2}) dE_ {1}  dE_ {2 }. $ We want to study which functions $ \tilde f $ give an isometry-invariant measure, that is a measure $m_{\tilde{f}} $ satisfying
$ m _ {\tilde f} (B) = m _ {\tilde f} (gB) $ for every euclidean motion $ g  $. 
For instance, it is known that for a given compact convex set~$K$ one has $\int_{E\cap K\neq\emptyset}dE=M$. So when  $\tilde f (E_ {1}, E_ {2}) =1$ we have  
\begin{eqnarray}\label{17mars}
\int_{K\cap E_{i}\not =\emptyset}
dE_{1}dE_{2}=M^{2}=4\pi^{2}\mathcal{W}^{2},
\end{eqnarray}
where $M$ and $\mathcal{W}$ are the mean curvature  and  the mean width of $K$, respectively.

\begin{prop}\label{mesuresinv}
The measure given by $ \tilde f (E_ {1}, E_ {2}) dE_ {1} \, dE_ {2} $ in $ \plans \times \plans $ is invariant under isometries of $ \euc $ if and only if $ \tilde f (E_ {1}, E_ {2}) = f (\langle u_ {1}, u_ {2} \rangle) $ where $ \pi (E_ {i}) ^ {\perp} = \langle u_ {i} \rangle $, $i=1,2$ and $ f: [- 1,1] \rightarrow \R $ is an even measurable  function.
\end{prop}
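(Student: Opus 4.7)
The plan is to split an arbitrary euclidean motion $g$ into a translation $t_{a}$ and a rotation about the origin $R\in O(3)$, and to run the two invariances in sequence.

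First I would dispatch the ``if'' direction. The linear part $R$ of $g$ sends $\pi(E_{i})$ to $R\pi(E_{i})$, so the unit normals transform as $u_{i}\mapsto\pm Ru_{i}$ (the sign being arbitrary because $u_{i}$ is only determined up to sign), while translations act trivially on $\pi(E_{i})$. Since $\langle Ru_{1},Ru_{2}\rangle=\langle u_{1},u_{2}\rangle$ and $dE_{1}\,dE_{2}$ is invariant by construction, the measure $m_{\tilde f}$ is invariant. Evenness of $f$ is exactly what is needed so that $f(\langle u_{1},u_{2}\rangle)$ does not depend on the sign choice for the $u_{i}$.

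For the converse I would exploit translation invariance first. Writing $E_{i}=E_{i}^{0}+p_{i}u_{i}$ with $E_{i}^{0}=\pi(E_{i})\in\grass$ and $p_{i}\in\R$, the invariant measure factors as $dE_{i}=dp_{i}\,d\nu(E_{i}^{0})$, and translation by $a\in\R^{3}$ acts as $(p_{1},p_{2})\mapsto(p_{1}+\langle a,u_{1}\rangle,p_{2}+\langle a,u_{2}\rangle)$ while fixing $E_{i}^{0}$. On the open dense set where $u_{1},u_{2}$ are linearly independent, the map $a\mapsto(\langle a,u_{1}\rangle,\langle a,u_{2}\rangle)$ from $\R^{3}$ to $\R^{2}$ is surjective, which forces $\tilde f$ to be independent of $(p_{1},p_{2})$ almost everywhere. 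Hence $\tilde f(E_{1},E_{2})=F(E_{1}^{0},E_{2}^{0})$ for some measurable $F$ on $\grass\times\grass$.

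Next I would bring in the rotation invariance. The orthogonal complement map identifies $\grass$ with $\mathrm{Gr}(3,1)$, so $F$ is a function of an unordered pair of lines through the origin. The group $SO(3)$ acts transitively on such pairs with a prescribed angle — fix the first line, then rotate about it to place the second — so the only invariant is the unoriented angle, equivalently $|\langle u_{1},u_{2}\rangle|$. Setting $f(t)$ to be the common value of $F$ on pairs with $|\langle u_{1},u_{2}\rangle|=|t|$ yields an even measurable function $f\colon[-1,1]\to\R$ with $\tilde f=f(\langle u_{1},u_{2}\rangle)$.

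The only mildly delicate point is bookkeeping the sign ambiguity of $u_{i}$ — both to show that evenness is forced in the converse and to check that $f(\langle u_{1},u_{2}\rangle)$ is well defined in the direct implication; no estimates or approximation arguments are required, only the transitivity statements already noted.
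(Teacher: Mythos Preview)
Your proposal is correct and follows essentially the same route as the paper's proof: translation invariance kills the dependence on the $p_{i}$, then rotation invariance forces $\tilde f$ to depend only on the angle between the normal lines. You are in fact slightly more careful than the paper in two places---you note that the reduction to a function on $\grass\times\grass$ only works directly on the open dense set where $u_{1},u_{2}$ are independent (the parallel case being a null set), and you spell out the ``if'' direction and the role of evenness, whereas the paper simply declares that direction clear.
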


\begin{proof}
Suppose that $ \tilde f (E_ {1}, E_ {2}) dE_ {1} \ dE_ {2} $ is invariant.~Using the representation of an element   $E\in \plans $ as a pair $(\pi(E), p)$  where $p=E\cap \langle\pi(E)\rangle^{\perp}$ we can write
$$\tilde f(E_{1},E_{2})=F(\pi(E_{1}),p_{1};\pi(E_{2}),p_{2})$$ 
for some $ F: (\grass \times \euc) ^ {2} \rightarrow \R $. For any translation $ \tau    $ it is $$ \tilde f (E_ {1} + \tau  , E_ {2} + \tau  ) = F (\pi (E_ { 1}), p_ {1} + \tau  ; \pi (E_ {2}), p_ {2} + \tau  ). $$ 
Due to the invariance of $ \tilde f (E_ {1} , E_ {2}) dE_ {1} \, dE_ {2} $ we have
$$
F(\pi(E_{1}),p_{1}+\tau  ;\pi(E_{2}),p_{2}+\tau  )=F(\pi(E_{1}),p_{1};\pi(E_{2}),p_{2})$$ 
and so $ F $ is independent of $ p_ {1} $ and $ p_ {2} $ and we can write $ \tilde f (E_ {1}, E_ {2}) = H (\pi (E_ {1}) , \pi (E_ {2})) $ for some function $ H $ on $ \grass \times \grass $.

Given $ t \in [-1,1] $ consider $ (V, W) \in \grass ^ {2} $ such that $ V = \langle v \rangle ^ {\perp} $, $ W = \langle w \rangle ^ {\perp} $ and $ t = \langle v, w \rangle $ with $v,w$ unit vectors. The function $ f (t) = H (V, W) $ is well defined since for any rotation $ \theta $ we have that $ H (\theta V, \theta W) = H (V, W ) $ and it is even. So it is proved  that there exists a measurable and even function  $ f: [- 1,1] \rightarrow \R $ such that $$ \tilde f (E_ {1}, E_ {2}) = f (\langle u_ {1}, u_ {2} \rangle). $$
If $ \tilde f $ is as above it is clear that $ \tilde f (E_ {1}, E_ {2}) dE_ {1}  dE_ {2} $ gives rise to an isometry-invariant measure.
\end{proof}




\section{Integral of functions  of pairs of planes meeting a convex set}
Let $K$ be a compact convex set in the euclidean space $E_{3}$. According to equality~\eqref{17mars} it is a natural question to evaluate
$$
\int_{E_{i}\cap K\not =\emptyset}\tilde f (E_ {1}, E_ {2}) dE_ {1} \ dE_ {2},
$$
where $\tilde f (E_ {1}, E_ {2}) dE_ {1} dE_ {2}$ is an isometry-invariant measure on $\plans\times \plans$. This can be done 
in terms of the coefficients of the expansion of the support function of $K$ in spherical harmonics and the coefficients of the Legendre series of the measurable even function $f:[-1,1]\rightarrow \R$ such that $\tilde f (E_ {1}, E_ {2})=f(\langle u_{1}, u_{2}\rangle)$ (see Proposition~\ref{mesuresinv}).

The following result is a special case, with a different notation, of Theorem 5 in~\cite{hugsch}. However we include a proof of this particular case for reader's convenience.  

\begin{thm}\label{2909b}Let $K$ be a compact convex set with   support function $p$  
given in terms of spherical  harmonics by \eqref{1607}.~Let $\tilde f (E_ {1}, E_ {2}) dE_ {1}\, dE_{2}$ be an isometry-invariant  measure on  $\plans\times \plans$ and $f:[-1,1]\rightarrow \R$ an even measurable function such that $\tilde f (E_ {1}, E_ {2})=f(\langle u_{1}, u_{2}\rangle)$ where $ \pi (E_ {i}) ^ {\perp} = \langle u_ {i} \rangle $, $i=1,2$. Then 
\begin{equation}\label{2909c}
\int_{E_{i}\cap K\not =\emptyset}\tilde f (E_ {1}, E_ {2}) dE_ {1}\, dE_ {2}={\lambda_{0}\over 4\pi}M^{2}+\sum_{\substack {n=2\\ n\, \mathrm{even}}}^{\infty}\lambda_{n}\lVert \pi_{n}(p)\rVert ^{2},
\end{equation}
where $\lambda_{n}=2\pi\int_{-1}^{1}f(t)P_{n}(t)\,dt$
with $P_{n}$
 the Legendre polynomial of degree $n$.
\end{thm}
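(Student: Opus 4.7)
The plan is to reduce the double integral to one over $S^{2}\times S^{2}$ by a convenient parameterization of planes, and then to exploit the Funk-Hecke theorem together with orthogonality of spherical harmonics. I parameterize each plane $E_{i}$ by its unit normal $u_{i}\in S^{2}$ and signed distance $t_{i}\in\R$; with the conventions of Section~2, this gives $dE_{i}=\tfrac12\,du_{i}\,dt_{i}$, the factor $\tfrac12$ compensating for the identification $u_{i}\sim -u_{i}$ in $\grass$. The condition $E_{i}\cap K\neq\emptyset$ becomes $-p(-u_{i})\leq t_{i}\leq p(u_{i})$, an interval of length $w(u_{i})$. Since $\tilde f(E_{1},E_{2})=f(\langle u_{1},u_{2}\rangle)$ is independent of the $t_{i}$, the $t$-integrations just produce $w(u_{1})w(u_{2})$, yielding
$$
\int_{E_{i}\cap K\neq\emptyset}\tilde f(E_{1},E_{2})\,dE_{1}\,dE_{2}=\frac{1}{4}\int_{S^{2}}\!\!\int_{S^{2}} f(\langle u_{1},u_{2}\rangle)\,w(u_{1})\,w(u_{2})\,du_{1}\,du_{2}.
$$

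Next I expand the width in spherical harmonics. Because a harmonic polynomial of degree $n$ satisfies $\pi_{n}(p)(-u)=(-1)^{n}\pi_{n}(p)(u)$, the identity $w(u)=p(u)+p(-u)$ gives $w=2\sum_{n\text{ even}}\pi_{n}(p)$. Substituting this in the display above and applying Theorem~\ref{FH} to the inner integral in $u_{2}$,
$$
\int_{S^{2}} f(\langle u_{1},u_{2}\rangle)\,\pi_{n}(p)(u_{2})\,du_{2}=\lambda_{n}\,\pi_{n}(p)(u_{1}),
$$
with $\lambda_{n}=2\pi\int_{-1}^{1}f(t)P_{n}(t)\,dt$, reduces the expression to a double sum of inner products $\int_{S^{2}}\pi_{m}(p)(u_{1})\,\pi_{n}(p)(u_{1})\,du_{1}$. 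By orthogonality of spherical harmonics of distinct degrees only the diagonal terms $m=n$ survive, leaving $\sum_{n\text{ even}}\lambda_{n}\lVert\pi_{n}(p)\rVert^{2}$.

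To obtain the form~\eqref{2909c} I isolate the $n=0$ term using $\pi_{0}(p)=M/(4\pi)$, a constant on $S^{2}$, so that $\lVert\pi_{0}(p)\rVert^{2}=M^{2}/(4\pi)$ and the contribution becomes $(\lambda_{0}/(4\pi))M^{2}$. The proof is essentially an accounting exercise: the real analytic content is Funk-Hecke, which is quoted, together with the identification of $w$ as a sum of even harmonics. The main obstacle I anticipate is simply bookkeeping the normalizations, in particular the factor $\tfrac12$ relating $d\nu$ to the standard spherical measure on $S^{2}$ and the factor $2$ in the even-harmonic expansion of $w$; also, a brief justification of interchanging the series with the integration is needed before applying Funk-Hecke term by term, which is legitimate since $w$ is a continuous function on $S^{2}$ (hence its harmonic expansion converges in $L^{2}$) and $f$ is bounded, so that $(u_{1},u_{2})\mapsto f(\langle u_{1},u_{2}\rangle)$ defines a bounded integral operator on $L^{2}(S^{2})$.
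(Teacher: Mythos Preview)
Your proof is correct and follows essentially the same route as the paper's: integrate out the distance parameter to produce the width, reduce to an integral over $S^{2}\times S^{2}$, apply Funk--Hecke, and use orthogonality of spherical harmonics. The only cosmetic difference is that the paper integrates over $E_{1}$ and $E_{2}$ sequentially and works with $p$ itself (using that $f$ even forces $\lambda_{n}=0$ for odd $n$), whereas you symmetrize both integrations at once and work with $w=2\sum_{n\text{ even}}\pi_{n}(p)$, which filters out the odd harmonics from the start; your bookkeeping of the factor $\tfrac12$ in $dE=\tfrac12\,du\,dt$ and the factor $2$ in $w$ is correct and cancels exactly as it should.
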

\begin{proof}
As $\tilde f(E_{1},E_{2})=f(\langle u_{1}, u_{2}\rangle)$ we have that 
$$\int_{E_{i}\cap K\not =\emptyset}\tilde f (E_ {1}, E_ {2}) dE_ {1} \ dE_ {2} = \int_{E_{i}\cap K\not =\emptyset}f(\langle u_{1}, u_{2}\rangle)dE_ {1} \ dE_ {2}.$$
For a fixed plane $E_{2}$ in $ \plans$ and writing $dE_{i}=dp_{i}\wedge d\nu$ one has
\begin{multline}
\int_{E_{1}\cap K\not =\emptyset}\tilde f (E_ {1}, E_ {2}) dE_ {1}  =\int_{\grass}\left(\int_{\langle u_{1}\rangle}f(\langle u_{1}, u_{2}\rangle) dp_{1}\right)d\nu=\\ =\int_{\grass}f(\langle u_{1}, u_{2}\rangle) (p(u_{1})+p(-u_{1}))d\nu=\int_{S^{2}}f(\langle u, u_{2}\rangle) p(u)\, du,
\end{multline}
where the last equality follows from the fact that $f$ is even.

As $p(u)=\sum_{n=0}^{\infty}\pi_{n}(p)(u)$ (cf. \eqref{1607}) and using Funk-Hecke's theorem we have that
$$
\int_{E_{1}\cap K\not =\emptyset}\tilde f (E_ {1}, E_ {2}) dE_ {1}  = \sum_{n=0}^{\infty}\lambda_{n}\pi_{n}(p)(u_{2}).
$$
Notice that $f$ being even one has $\lambda_{n}=0$ for $n$ odd.
Now performing the integral with respect $E_{2}$ we have
\begin{multline*}
\int_{E_{i}\cap K\not =\emptyset}\tilde f (E_ {1}, E_ {2}) dE_ {1} \ dE_ {2} = 
\sum_{\substack {n=0\\ n\, \mathrm{even}}}^{\infty}\lambda_{n}\int_{\grass}\left(\int_{\langle u_{2}\rangle}\pi_{n}(p)(u_{2})dp_{2}\right)d\nu =\\
=  \sum_{\substack {n=0\\ n\, \mathrm{even}}}^{\infty}\lambda_{n}\int_{S^{2}}\pi_{n}(p)(u)p(u)\, du=\sum_{\substack {n=0\\ n\, \mathrm{even}}}^{\infty}\lambda_{n}\lVert \pi_{n}(p)\rVert ^{2},
\end{multline*}
where we have used the fact that $\pi_{n}(p)(u)=\pi_{n}(p)(-u)$ for $n$ even.
Taking into account that $\pi_{0}(p)=M/4\pi$ we have $$
\lVert \pi_{0}(p)\rVert ^{2}={M^{2}\over 16\pi^{2}}\lVert 1\rVert ^{2}={M^{2}\over 4\pi}
$$
and then
$$
\int_{E_{i}\cap K\not =\emptyset}\tilde f (E_ {1}, E_ {2}) dE_ {1} \ dE_ {2}={\lambda_{0}\over 4\pi}M^{2}+\sum_{\substack {n=2\\ n\, \mathrm{even}}}^{\infty}\lambda_{n}\lVert \pi_{n}(p)\rVert ^{2}.
$$
\end{proof}
\begin{ex}\label{mars2}
If $f(t)=\sqrt{1-t^{2}}$ then $f(\langle u_{1},u_{2}\rangle)=\sin(\theta_{12})$ where  $0\leq\theta_{12}\leq \pi$ is the angle between de planes $E_{1}$ and $E_{2}$ (that is, $\cos\theta_{12}=\pm\langle u_{1},u_{2}\rangle$ where $ \pi (E_ {i}) ^ {\perp} = \langle u_ {i} \rangle $, $i=1,2$).	
Applying Theorem \ref{2909b} with the corresponding coefficients
\begin{equation*}\label{lambdessinus}
\lambda_{2n}=2\pi\int_{-1}^{1}f(t)P_{2n}(t)=
-{\Gamma(n+\frac12)\Gamma(n-\frac12)\over n!(n+1)!}\frac{\pi}{2},\ \lambda_{0}=\pi^{2},\ \lambda_{2n+1}=0
\end{equation*}
(cf. \cite{grad}, {\bf 7.132}), one gets
\begin{equation}\label{lambdessinus}
\int_{E_{i}\cap K\neq\emptyset}\sin(\theta_{12})dE_{1}\,dE_{2}=\frac{\pi}{4}M^{2}-\frac{\pi}{2}\left(\sum_{n=1}^{\infty}{\Gamma(n+\frac12)\Gamma(n-\frac12)\over n!(n+1)!}\lVert \pi_{2n}(p)\rVert^{2}\right).
\end{equation}
\end{ex}
In the particular case that $f$ is a  Legendre polynomial  one obtains from Theorem~\ref{2909b} the following 

\begin{cor}\label{Pn}Let $K$ be a compact convex set with support function  $p$   
given in terms of spherical  harmonics by \eqref{1607}. Then if $P_{n}$ is the Legendre polynomial of even degree $n$, one has 
$$\int_{E_{i}\cap K\neq\emptyset}P_{n}(\langle u_{1},u_{2} \rangle)\,dE_{1}\,dE_{2}=\frac{4\pi}{2n+1}\lVert \pi_{n}(p)\rVert ^{2}.$$
\end{cor}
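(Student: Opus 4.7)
The plan is to apply Theorem \ref{2909b} directly with the even function $f(t) = P_n(t)$, reducing the corollary to a routine computation of Legendre coefficients. Since $n$ is even, the function $f = P_n$ is indeed an even function on $[-1,1]$, so the hypothesis of Theorem \ref{2909b} is satisfied and the measure $P_n(\langle u_1, u_2\rangle)\, dE_1\, dE_2$ is isometry-invariant.

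The key ingredient is the orthogonality relation for Legendre polynomials,
$$\int_{-1}^{1} P_n(t) P_k(t)\, dt = \frac{2}{2n+1}\,\delta_{nk},$$
from which the coefficients $\lambda_k = 2\pi \int_{-1}^{1} P_n(t) P_k(t)\, dt$ in the expansion \eqref{2909c} reduce to $\lambda_k = \frac{4\pi}{2n+1}\delta_{nk}$. First I would substitute $f = P_n$ into Theorem \ref{2909b}; then the infinite sum in \eqref{2909c} collapses to a single term.

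For $n \geq 2$ one has $\lambda_0 = 0$, killing the $M^2$ term, and only the index $k = n$ survives in the sum, yielding
$$\int_{E_i \cap K \neq \emptyset} P_n(\langle u_1, u_2\rangle)\, dE_1\, dE_2 = \lambda_n\, \lVert \pi_n(p)\rVert^2 = \frac{4\pi}{2n+1}\lVert \pi_n(p)\rVert^2,$$
which is the claimed identity. (The case $n=0$, if included, is consistent: then $\lambda_0 = 4\pi$ and the formula collapses to $M^2$, matching \eqref{17mars} since $\lVert \pi_0(p)\rVert^2 = M^2/(4\pi)$.)

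There is no real obstacle here: the only nontrivial input is the orthogonality of the Legendre polynomials on $[-1,1]$, which is classical, and the evenness of $P_n$ for even $n$, which is immediate from the parity of $P_n$. The corollary is essentially a restatement of Theorem \ref{2909b} specialized to an eigenfunction of the Funk--Hecke operator.
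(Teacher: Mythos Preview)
Your proof is correct and follows essentially the same approach as the paper: apply Theorem~\ref{2909b} with $f=P_n$ and use the orthogonality of Legendre polynomials to conclude that $\lambda_m=0$ for $m\neq n$ and $\lambda_n=\dfrac{4\pi}{2n+1}$. The paper's proof is just these two observations stated without the supporting detail you provide.
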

\begin{proof}
In this case $\lambda_{m}=0$ for $m\neq n$ and 
$\lambda_{n}=\dfrac{4\pi}{2n+1}$. 
\end{proof} 
\begin{ex}\label{2503b}
As the function $f(t)=t^{2n}$ can be  written in terms of Legendre polynomials as 
$\displaystyle t^{2n}=\sum_{k=0}^{n}\mu_{n,k}P_{2k}(t)$ with
\begin{equation}\label{t2n}
\mu_{n,k}={(4k+1)\Gamma(2n+1)\sqrt{\pi}\over 2^{2n+1}\Gamma(n-k+1)\Gamma(n+k+3/2)}
\end{equation}
(see \cite{grad}, {\bf 8.922}) we get the following consequence of Corollary \ref{Pn}:

\begin{eqnarray*}
\int_{E_{i}\cap K\neq\emptyset}\langle u_{1},u_{2} \rangle^{2n}\,dE_{1}\,dE_{2}= \sum_{k=0}^{n}{4\pi\over 4k+1}\mu_{n,k}\lVert \pi_{2k}(p)\rVert ^{2}
\end{eqnarray*}
where $\mu_{n,k}$ are given by \eqref{t2n}.
\end{ex}
To end this section we analyze equality \eqref{2909c} when  $K$
is a  convex set of constant width.
As said this means that $\pi_{n}(p)=0$ for  $n\neq 0$ even. 
%

\begin{prop}\label{Kampconst}Let $K$ be a compact convex set of constant width $\mathcal{W}$ and let $f:[-1,1]\longrightarrow \R$ an even bounded measurable function. Then
\begin{eqnarray}\label{15mars}\int_{E_{i}\cap K\neq\emptyset}f(\langle u_{1},u_{2}\rangle)\, dE_{1}\,dE_{2}=\lambda_{0}\pi \mathcal{W}^{2},
\end{eqnarray}
with $\lambda_{0}=2\pi \int_{-1}^{1}f(t)dt.$ Moreover  if the above equality holds when $f(t)=P_{2n}(t)$ where $P_{2n}$ is any Legendre polynomial of even degree $2n$, $n\not =0$  then $K$ is of constant width.
\end{prop}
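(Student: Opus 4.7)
The plan is to derive both implications directly from Theorem~\ref{2909b} and its specialization Corollary~\ref{Pn}, together with the characterization recalled in the Preliminaries: $K$ has constant width if and only if $\pi_{n}(p)=0$ for every even $n\neq 0$.

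For the direct implication, assume $K$ has constant width $\mathcal{W}$. Then every term with even $n\geq 2$ in the sum on the right-hand side of \eqref{2909c} vanishes, and only the contribution $\lambda_{0}M^{2}/(4\pi)$ survives. Since $\pi_{0}(p)=\mathcal{W}/2=M/(4\pi)$, we have $M=2\pi\mathcal{W}$ and hence $M^{2}=4\pi^{2}\mathcal{W}^{2}$, so the integral reduces to $\lambda_{0}\pi\mathcal{W}^{2}$, exactly as claimed in \eqref{15mars}.

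For the converse, test the assumed identity against $f=P_{2n}$ for each fixed $n\geq 1$. By orthogonality of the Legendre polynomials to the constant $P_{0}=1$ we have $\lambda_{0}=2\pi\int_{-1}^{1}P_{2n}(t)\,dt=0$, so the right-hand side of \eqref{15mars} is zero. On the other hand Corollary~\ref{Pn} evaluates the left-hand side as $\dfrac{4\pi}{4n+1}\lVert\pi_{2n}(p)\rVert^{2}$. The hypothesis therefore forces $\lVert\pi_{2n}(p)\rVert^{2}=0$, whence $\pi_{2n}(p)=0$ for every $n\geq 1$. By the characterization above, $K$ has constant width.

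No serious obstacle is expected: the argument is essentially pure bookkeeping, the only subtle point being the normalizations linking $\pi_{0}(p)$, $\mathcal{W}$ and $M$ that make the $\lambda_{0}M^{2}/(4\pi)$ term match $\lambda_{0}\pi\mathcal{W}^{2}$. The converse works because testing with $P_{2n}$ is precisely the sharpest possible test, isolating one even harmonic component of $p$ at a time.
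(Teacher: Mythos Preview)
Your proof is correct and follows essentially the same route as the paper: invoke Theorem~\ref{2909b} and use the constant-width characterization to kill all even $\pi_{n}(p)$ terms for the direct implication, then for the converse test with $f=P_{2n}$ so that $\lambda_{0}=0$ and apply Corollary~\ref{Pn} to conclude $\pi_{2n}(p)=0$ for all $n\geq 1$.
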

\begin{proof}
Since $K$ is of constant width by \eqref{2909c} one gets 
$$\int_{E_{i}\cap K\neq\emptyset}f(\langle u_{1},u_{2}\rangle)\, dE_{1}\,dE_{2}={\lambda_{0}\over 4\pi}{M^{2}}$$
and remembering that $M=2\pi \mathcal{W}$ the equality follows. If  equality \eqref{15mars} holds for  $f(t)=P_{n}(t)$ with $n$ even, $n\not =0$, and  since the corresponding $\lambda_{0}$ vanishes one has $$\int_{E_{i}\cap K\neq\emptyset}P_{n}(\langle u_{1},u_{2} \rangle)\,dE_{1}\,dE_{2}=0.$$ Therefore by Corollary \ref{Pn} it follows  that $\lVert \pi_{n}(p)\rVert =0$ for every non zero $n$ even and $K$ is of constant width.
\end{proof} 


%
%
%
%

\section{Integrals of invariant measures  in terms of the  visual angle}
The aim of this section 
is to write  
the integral of a isometry-invariant measure over the pairs of planes meeting a convex set $K$, given in Theorem \ref{2909c},   as an integral of an appropriate function of the visual angle. 
\smallskip

Let us precise what we mean by the angle of a plane about a straight line $G$ and the visual angle of a convex set $K$ from a line $G$ not meeting $K$.

\pagebreak

\begin{defn}\label{0610}\mbox{}
\begin{enumerate}
\item[1.]  Given a  straight line $G$ let $(q; e_{1}, e_{2})$ be a fixed  affine orthonormal frame  in $G^{\bot}$ with $q\in G$. For each plane $E$ through $G$ let $u$ be the unit normal vector to $E$ pointing from the origin to it. Then the  angle $\alpha$ associated to~$E$
is the one given by 
$u=\cos(\alpha)e_{1}+\sin(\alpha)e_{2}$.
\item[2.] The visual angle  of a convex set $K$ from a line $G$ not meeting $K$ is the angle  $\omega=\omega(G)$, $0\leq \omega\leq\pi$, between the {\em half-planes} $E_{1},E_{2}$ through $G$ tangents to $K$. 
\end{enumerate}
\end{defn}
If $\alpha_{i}$ are the angles associated  to $E_{i}$, $i=1,2$ 
then $$\cos(\pi-\omega)=\cos(\alpha_{2}-\alpha_{1})=\langle u_{1},u_{2}\rangle$$ 
where $u_{1},u_{2}$ are  the normal unit vectors to $E_{1},E_{2}$ pointing from the origin, assuming the origin inside  $K$. 
\smallskip	

The measure $ dE_ {1} \, dE_ {2 }$ in the space $ \plans \times \plans $ of pairs of planes in $ \euc $ can be written according to Santaló (cf. \cite{santalo}, section II.12.6) as
\begin{eqnarray}\label{01}
dE_{1}\, dE_{2}=\sin^{2}(\alpha_{2}-\alpha_{1})\,d\alpha_{1}\, d\alpha_{2}\, dG.\end{eqnarray}
Then we can prove the following
\begin{thm}\label{1310c} Let $K$ be a compact convex set 
and  let $f:[-1,1]\longrightarrow \R$ be an even   continuous function. Let $H$ be the  ${\mathcal{C}}^{2}$ function on $[-\pi,\pi]$
 satisfying 
 $$H''(x)=f(\cos(x))\sin^{2}(x), \quad  -\pi\leq x\leq \pi,  \quad  H(0)=H'(0)=0.$$
Then \begin{equation}\label{mars}
\int_{E_{i}\cap K\neq\emptyset}f(\langle u_{1},u_{2} \rangle)\,dE_{1}\,dE_{2}=
{\pi}H(\pi)F+
2\int_{G\cap K=\emptyset} H(\omega)\,dG,
\end{equation}
where $u_{i}$ are  normal unit vectors   to the planes $E_{i}$, $\omega=\omega(G)$  is the visual angle from the line $G$ and $F$ is the area of the boundary of $K$. 
\end{thm}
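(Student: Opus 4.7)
The idea is to decompose the double integral over ordered pairs of planes using Santal\'o's formula \eqref{01}, turning it into an iterated integral whose outer variable is the common intersection line $G\in\rectes$ and whose inner variables are the two angular parameters $\alpha_1,\alpha_2$ that specify the planes through $G$. So I would write
$$\int_{E_i\cap K\neq\emptyset}f(\langle u_1,u_2\rangle)\,dE_1\,dE_2
=\int_{\rectes}\iint_{R(G)}f(\cos(\alpha_2-\alpha_1))\sin^2(\alpha_2-\alpha_1)\,d\alpha_1\,d\alpha_2\,dG,$$
where $R(G)\subset[0,\pi)^2$ is the set of angular pairs for which both planes through $G$ meet $K$, and use that $f$ even forces $f(\cos(\alpha_2-\alpha_1))=f(\langle u_1,u_2\rangle)$.

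Next I would split $\rectes$ according to whether $G$ meets $K$ and describe $R(G)$ in each case. If $G\cap K\neq\emptyset$, every plane containing $G$ meets $K$, so $R(G)=[0,\pi)^2$. If $G\cap K=\emptyset$, I would project $K$ orthogonally onto $G^\perp$: the projected convex body $K'$ and the projected point $G'$ exhibit a planar visual angle equal to $\omega(G)$, and planes through $G$ meeting $K$ correspond to lines through $G'$ meeting $K'$. A short argument shows these lines form an arc of length $\omega(G)$ in the parameter $\alpha\in[0,\pi)$, so $R(G)$ is (a translate of) $[0,\omega(G)]^2$.

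For the inner integral, setting $g(x):=f(\cos x)\sin^2 x$ (even because $f$ is even) and using the standard convolution identity
$$\iint_{[0,L]^2}g(\alpha_2-\alpha_1)\,d\alpha_1\,d\alpha_2=2\int_0^L g(x)(L-x)\,dx,$$
I recognize the right-hand side as $2H(L)$: integrating $H''=g$ twice with the initial conditions $H(0)=H'(0)=0$ gives exactly $H(L)=\int_0^L g(x)(L-x)\,dx$. Therefore the inner integral equals $2H(\pi)$ when $G\cap K\neq\emptyset$ and $2H(\omega(G))$ otherwise.

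Assembling the two contributions and invoking Cauchy's formula $\int_{G\cap K\neq\emptyset}dG=\pi F/2$, the first piece becomes $2H(\pi)\cdot\pi F/2=\pi H(\pi)F$ and the second piece remains $2\int_{G\cap K=\emptyset}H(\omega(G))\,dG$, which is precisely \eqref{mars}. The main obstacle is the geometric bookkeeping in Step 2: reconciling the definition of $\omega(G)$ in terms of the two tangent \emph{half}-planes bounding a dihedral sector with the length of the arc of \emph{full} planes through $G$ meeting $K$ parameterized by $\alpha\in[0,\pi)$. The orthogonal projection along $G$ reduces this to a transparent planar statement, but care is needed about the $\alpha\leftrightarrow\alpha+\pi$ identification for full planes.
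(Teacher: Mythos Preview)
Your proof is correct and follows essentially the same route as the paper: Santal\'o's decomposition \eqref{01}, the split according to whether $G$ meets $K$, identification of the angular domain as $[0,\omega(G)]^2$ (respectively $[0,\pi)^2$), and the formula $\int_{G\cap K\neq\emptyset}dG=\pi F/2$. The only cosmetic difference is that the paper integrates $\int_0^L\!\int_0^L H''(\alpha_2-\alpha_1)\,d\alpha_1\,d\alpha_2$ directly to obtain $H(L)+H(-L)$ and then invokes the evenness of $H$, whereas you use the convolution identity $\iint_{[0,L]^2}g(\alpha_2-\alpha_1)\,d\alpha_1\,d\alpha_2=2\int_0^L g(x)(L-x)\,dx=2H(L)$; these are the same computation.
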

\begin{proof}
Let $G=q+\langle u\rangle $ with $u$ a unit director vector such that $K\cap G =\emptyset$. Let $E_{i}, i=1,2$ be the supporting planes of $K$  through $G$. Take now an affine orthonormal  frame $\{q;e_{1},e_{2}, u\}$ in $\euc$ such that $E_{1}=q+\langle e_{1},u\rangle .$ Every plane $E$ through $G$ can be written as $E=q+\langle v_{\alpha},u\rangle $ where $v_{\alpha}=\cos\alpha e_{1}+\sin\alpha e_{2}$ with $\alpha \in [0,\pi)$ and the planes $E$ intersecting $K$ correspond to angles $\alpha\in [0,\omega(G)].$ Then using \eqref{01} one has 
\begin{multline*}
\int_{E_{i}\cap K\neq\emptyset}f(\langle u_{1},u_{2} \rangle)\,dE_{1}\,dE_{2}= \\ = 
\int_{G\cap K=\emptyset}\int_{0}^{\omega}\int_{0}^{\omega}H''(\alpha_{2}-\alpha_{1})\,d\alpha_{1}\,d\alpha_{2}\,dG\, + \\ + 
\int_{G\cap K\neq\emptyset}\int_{0}^{\pi}\int_{0}^{\pi}H''(\alpha_{2}-\alpha_{1})\,d\alpha_{1}\,d\alpha_{2}\,dG.  
\end{multline*}
Evaluating the inner integrals and taking into account that $\int_{G\cap K\neq\emptyset}dG=\frac{\pi}{2}F$
it follows
\begin{multline*}
\int_{E_{i}\cap K\neq\emptyset}f(\langle u_{1},u_{2} \rangle)\,dE_{1}\,dE_{2}= \\ =
\frac12{\pi}(H(\pi)+H(-\pi))F + 
\int_{G\cap K=\emptyset} (H(\omega)+H(-\omega))\,dG.
\end{multline*}
Since  $H(0)=H'(0)=0$ and $H''(x)=H''(-x)$ it is easy to see that $H(x)=H(-x)$ and the result follows.
\end{proof}
%


\begin{ex}\label{exblas}
Consider $f(t)=1$ then $H(x)=(x^{2}-\sin^{2}x)/4$ and
\begin{eqnarray}\label{2503}
M^{2}=\int_{E_{i}\cap K\neq\emptyset}dE_{1}\,dE_{2}=
\frac14{\pi}^{3}F+
\frac12\int_{G\cap K=\emptyset} (\omega^{2}-\sin^{2}\omega)\,dG,
\end{eqnarray}
which gives the well known Blaschke's formula (cf. \cite{Blaschke2007})
\end{ex}

\begin{ex}
Let $f(t)=\sqrt{1-t^{2}}$ be the function considered in Example \ref{mars2}.  In  this case the corresponding 
 function $H$
 such that
 $$H''(x)=f(\cos(x))\sin^{2}(x)=|\sin^{3}(x)|$$
 is 
 given by
 $$H(x)=\frac23\left(|x|-|\sin x|\right)-\frac19|\sin^{3} x|.$$ 
 Now, since $\omega\in[0,\pi]$, Theorem \ref{1310c} and equality  \eqref{lambdessinus} leads to
\begin{multline*}
\int_{G\cap K=\emptyset}\left(\omega-\sin\omega-\frac1{3!}\sin^{3}\omega\right)dG =\\ =
\pi\left(\frac{3M^{2}}{16}- \frac{1}{2}\pi F
-\frac{3}{8}\sum_{n=1}^{\infty}{\Gamma(n+\frac12)\Gamma(n-\frac12)\over n!(n+1)!}\lVert\pi_{2n}(p)\rVert^{2}\right).
\end{multline*}
\end{ex}

\begin{ex}\label{sin4}
Taking now $f(t)=t^{2}$, one gets $H(x)=1/16 (x^{2}-\sin^{2}x\cos^{2}x)=1/16(x^{2}-\sin^{2}x+\sin^{4}x)$. According Exemple \ref{2503b}, Theorem \ref{1310c} and Blaschke formula one gets
\begin{multline}
{1\over 3}M^{2}+{8\pi\over 15}\lVert \pi_{2}(p)\rVert ^{2} =
\int_{E_{i}\cap K\neq\emptyset}\langle u_{1}, u_{2}\rangle^{2}\, dE_{1}\,dE_{2}= \\ =
\frac14{M^{2}}+\frac18 \int_{G\cap K=\emptyset}\sin^{4}\omega\, dG,
\end{multline}
and so
$$
\int_{G\cap K=\emptyset}\sin^{4}\omega\, dG= \frac23 M^{2}+{64\pi\over 15}\lVert \pi_{2}(p)\rVert ^{2}.
$$
Notice that in the case that $K$ has constant width one obtains $$\int_{G\cap K=\emptyset}\sin^{4}\omega\, dG= \frac23 M^{2}.$$
\end{ex}
More generally if $f(t)=t^{2n}$ we have the following
\begin{prop} Let $K$ be a compact convex set with mean curvature  $M.$ Then one has
\begin{equation}\label{prod2n}
\int_{E_{i}\cap K\neq\emptyset}\langle u_{1},u_{2} \rangle^{2n}\,dE_{1}\,dE_{2}=
-4A_{1}^{(n)}M^{2}+2\sum_{r=2}^{n+1}A_{r}^{(n)}\int_{G\cap K=\emptyset}\sin^{2r}\omega\,dG,
\end{equation}
where
$$
A_{r}^{(n)}={(-1)^{r}{n\choose r-1}\over 4r^{2}}\, {}_3 F_2(1,r+1/2,r-n-1;r,r+1;1),\quad r=1,\dots,n+1.$$
\end{prop}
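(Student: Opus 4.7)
The strategy is to apply Theorem \ref{1310c} with $f(t)=t^{2n}$ and then eliminate the resulting $\int_{G\cap K=\emptyset}\omega^{2}dG$ by means of Blaschke's formula \eqref{2503} of Example \ref{exblas}.

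First I construct the auxiliary function $H$. Writing $\cos^{2n}x=(1-\sin^{2}x)^{n}$ and expanding by the binomial theorem,
\begin{equation*}
H''(x)=\cos^{2n}(x)\sin^{2}(x)=\sum_{r=1}^{n+1}(-1)^{r-1}\binom{n}{r-1}\sin^{2r}(x).
\end{equation*}
I therefore look for $H$ in the form $H(x)=c_{0}x^{2}+\sum_{r=1}^{n+1}c_{r}\sin^{2r}(x)$, which automatically satisfies $H(0)=H'(0)=0$. Using $(\sin^{2r}x)''=2r(2r-1)\sin^{2r-2}(x)-4r^{2}\sin^{2r}(x)$, matching the constant term and the coefficient of $\sin^{2r}x$ in $H''$ against the expansion above gives $c_{0}+c_{1}=0$ together with the recurrence
\begin{equation*}
2(r+1)(2r+1)c_{r+1}-4r^{2}c_{r}=(-1)^{r-1}\binom{n}{r-1},\qquad r=1,\ldots,n+1,
\end{equation*}
and the boundary condition $c_{n+2}=0$.

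The main technical step is to identify the solution of this recurrence with $c_{r}=A_{r}^{(n)}$ for $r\ge 1$ (so that $c_{0}=-A_{1}^{(n)}$). Substituting the terminating ${}_{3}F_{2}$ series of the claimed $A_{r}^{(n)}$ and $A_{r+1}^{(n)}$ into the left-hand side and comparing the coefficient of $k$ in each sum reduces, after shifting indices, to a standard contiguous relation for ${}_{3}F_{2}$ at argument $1$. I expect this verification to be the principal obstacle; as a sanity check, the case $r=n+1$ (where the hypergeometric series collapses to its first term) yields $A_{n+1}^{(n)}=(-1)^{n+1}/[4(n+1)^{2}]$, in agreement with the value of $c_{n+1}$ forced by $c_{n+2}=0$.

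With $H(x)=-A_{1}^{(n)}x^{2}+\sum_{r=1}^{n+1}A_{r}^{(n)}\sin^{2r}(x)$, Theorem \ref{1310c} together with $\sin^{2r}\pi=0$ gives
\begin{equation*}
\int_{E_{i}\cap K\neq\emptyset}\!\langle u_{1},u_{2}\rangle^{2n}dE_{1}dE_{2}=-A_{1}^{(n)}\pi^{3}F-2A_{1}^{(n)}\!\int_{G\cap K=\emptyset}\!\omega^{2}dG+2\sum_{r=1}^{n+1}A_{r}^{(n)}\!\int_{G\cap K=\emptyset}\!\sin^{2r}\omega\,dG.
\end{equation*}
Substituting Blaschke's identity $\int(\omega^{2}-\sin^{2}\omega)dG=2M^{2}-\pi^{3}F/2$, the two contributions in $\pi^{3}F$ cancel and the coefficient of $\int\sin^{2}\omega\,dG$ becomes $2(A_{1}^{(n)}-A_{1}^{(n)})=0$ by virtue of $c_{0}+c_{1}=0$, so the sum reindexes to start at $r=2$ and yields exactly formula \eqref{prod2n}.
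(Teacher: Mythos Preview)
Your plan is essentially the paper's own proof: apply Theorem \ref{1310c} to $f(t)=t^{2n}$, write $H$ as an $x^{2}$ term plus a polynomial in $\sin^{2}x$, and use Blaschke's identity \eqref{2503} to absorb the $\pi^{3}F$ and $\int\omega^{2}\,dG$ contributions, whereupon the $r=1$ term cancels and the sum begins at $r=2$.

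The only substantive difference is how the coefficients $A_{r}^{(n)}$ are obtained. The paper integrates each $\sin^{2(k+1)}x$ twice using the reduction formula (Gradshteyn--Ryzhik {\bf 2.511}(2)), then collects the contributions to each $\sin^{2r}x$ and packages the resulting finite sum as the terminating ${}_{3}F_{2}$. You instead differentiate the ansatz and set up the two-term recurrence, then assert that the stated ${}_{3}F_{2}$ satisfies it via a contiguous relation --- but you do not actually carry this verification out, and you flag it yourself as the principal obstacle. The paper's direct-integration route sidesteps this: once the double integration is done, the ${}_{3}F_{2}$ is a repackaging rather than something to be checked. If you want to close your version cleanly, it is easiest to solve the recurrence downward from $c_{n+2}=0$, obtaining $c_{r}$ as an explicit finite sum over $k\geq r-1$, and then identify that sum with the terminating ${}_{3}F_{2}$ term by term; this is exactly the sum the paper writes for $A_{r}^{(n)}$ before compressing it. A minor remark: in your final paragraph the cancellation of the $r=1$ term follows immediately from having already written the $x^{2}$-coefficient as $-A_{1}^{(n)}$, so the second invocation of $c_{0}+c_{1}=0$ is redundant.
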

We recall that
${}_3 F_2$ is the hypergeometric function given by
$$
{}_3 F_2(a_{1},a_{2},a_{3};b_{1},b_{2};z)= \sum_{n=0}^{\infty}{(a_{1})_{n}(a_{2})_{n}(a_{3})_{n}\over (b_{1})_{n}(b_{2})_{n}}{z^{n}\over n!}
$$
with  $(a)_{n}=\Gamma(a+n)/\Gamma(a)$ is the Pochhammer symbol.

The integral in the left-hand side of \eqref{prod2n} was calculated in Exemple \ref{2503b} in terms of the expansion  in spherical harmonics of the support function of $K$. 
\begin{proof}
We will apply Theorem \ref{1310c} for $f(t)=t^{2n},$ that is $H''(x)=\cos^{2n}x\, \sin^{2}x.$
We search the corresponding $H(x).$ Le us write
$$
H''(x)=\sum_{k=0}^{n}{n\choose k}(-1)^{k}\sin^{2(k+1)}x.
$$
Integrating twice  and using formula {\bf 2.511}(2) in \cite{grad} we get
\begin{multline*}
H(x)=\frac14(x^{2}-\sin^{2}x)+\\ +\sum_{k=1}^{n}{n\choose k}(-1)^{k}\left[
\alpha_{k}{x^{2}\over 2} -{1\over 2(k+1)}\left({\sin^{2(k+1)}x\over 2(k+1)}+\sum_{r=1}^{k}\beta_{r}^{(k)}{\sin^{2(k-r+1)}x\over 2(k-r+1)}\right)
\right],
\end{multline*}
with
$$
\alpha_{k}={\Gamma(k+3/2)\over\Gamma(k+2)},\quad
\beta_{r}^{(k)}= {\Gamma(k-r+1)\Gamma(k+3/2)\over \Gamma(k+1)\Gamma(k-r+3/2)}.
$$
This provides for $H(x)$ an expression of the type
$$H(x)=A_{0}^{(n)}x^{2}+A_{1}^{(n)}\sin^{2}x+\cdots +A_{n+1}^{(n)}\sin^{2(n+1)}x.$$
Let us find now the values on $A_{r}^{(n)}$. Beginning with $A_{0}^{(n)}$ 
one gets
$$
A_{0}^{(n)}={{2n\choose n}\over 2^{2(n+1)}}.
$$
For $r>0$ one has
$$
A_{r}^{(n)}=\frac14 \sum_{k=r-1}^{n}{n\choose k}{{(-1)}^{k+1}\over k+1}{\beta_{k-r+1}^{(k)}\over r},\quad  r=1,\dots, n+1,
$$
or in a more compact form
$$
A_{r}^{(n)}={(-1)^{r}{n\choose r-1}\over 4r^{2}}\, {}_3 F_2(1,r+1/2,r-n-1;r,r+1;1),\quad r=1,\dots,n+1.
$$
One easily sees that  $A_{0}^{(n)}=-A_{1}^{(n)}$ 
an so
\begin{equation}
H(x)=A_{0}^{(n)}(x^{2}-\sin^{2}x)+\sum_{r=2}^{n+1}A_{r}^{(n)}\sin^{2r}x.
\end{equation}
Finally Theorem \ref{1310c} and Blaschke identity \eqref{2503} give the result.
\end{proof}

\subsection{Crofton's formula in the space}
 Blaschke's formula \eqref{2503} reveals the significance of the function 
 of the visual  angle $\omega^{2}-\sin^{2}\omega$. One can ask what role the function $\omega-\sin\omega$  plays; 
 this function, interpreting $\omega$ as the visual angle in the plane,  is significant thanks to Crofton's formula 
 $\int_{P\notin K}(\omega-\sin\omega) dP=L^{2}/2- \pi F$, where $K$ is a compact convex set in the plane with area $F$ and length of its boundary $L$ (see \cite{santalo}).

  In \cite[p. 85]{Blaschke2007} Blaschke shows  that 
\begin{equation}\label{pepitogrillo}
\int_{G\cap K=\emptyset}(\omega-\sin\omega)dG=\frac14 \int_{u\in S^{2}}L_{u}^{2}du-{\pi^{2}\over 2}F,
\end{equation}
where $L_{u}$ is the length of the boundary of  the projection of $K$ on $\langle u\rangle^{{\bot}}$.

It can be easily seen that $\int_{u\in S^{2}}L_{u}du=2\pi M$ and from this equality, applying Schwarz's inequality, one gets
\begin{equation}\label{desblasc}
\int_{u\in S^{2}}L_{u}^{2}du\geq \pi M^{2}.
\end{equation}
Introducing \eqref{desblasc} into \eqref{pepitogrillo} one obtains 
\begin{equation}\label{mars3}
\int_{G\cap K=\emptyset}(\omega-\sin\omega)dG\geq {\pi\over 4}(M^{2}-2\pi F).
\end{equation}

As a consequence of Theorem \ref{1310c} we can now evaluate the deficit in both inequalities  \eqref{desblasc} and \eqref{mars3}. 

 \begin{thm}\label{1903} Let $K$ be a compact convex set with support function $p$, area of its boundary $F$ and mean curvature $M$.
Let $L_{u}$ be the length of the boundary of  the projection of $K$ on $\langle u\rangle^{{\bot}}$
 and let $\omega=\omega(G)$ be the visual angle of $K$ from the line~$G$. Then
\begin{enumerate}[i)]
\item $\displaystyle \int_{u\in S^{2}}L_{u}^{2}du=\pi M^{2}+4\pi\sum_{n=1}^{\infty}{\Gamma(n+1/2)^{2}\over \Gamma(n+1)^{2}}\lVert \pi_{2n}(p)\rVert^{2},$
\item $\displaystyle \int_{G\cap K=\emptyset}(\omega-\sin\omega)dG={\pi\over 4}(M^{2}-2\pi F)+\pi\sum_{n=1}^{\infty}{\Gamma(n+1/2)^{2}\over \Gamma(n+1)^{2}}\lVert \pi_{2n}(p)\rVert^{2}.$
\end{enumerate}

Moreover equality holds both in \eqref{desblasc} and \eqref{mars3} if and only if $K$ is of constant width. 
 \end{thm}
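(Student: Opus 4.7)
The plan is to prove (i) by realizing $L_u$ as a spherical Radon (Funk) transform of $p$ and applying $L^{2}(S^{2})$-orthogonality of spherical harmonics, then to obtain (ii) by substituting (i) into Blaschke's identity \eqref{pepitogrillo}, and finally to read off the equality assertion from the characterization of constant width recalled in Section~2.

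For (i): the orthogonal projection of $K$ onto $\langle u\rangle^{\perp}$ is a planar convex body whose support function at $v\in \langle u\rangle^{\perp}\cap S^{2}$ is $p(v)$, so by the planar Cauchy perimeter formula
$$L_u=\int_{\langle u\rangle^{\perp}\cap S^{2}}p(v)\,d\ell(v).$$
The key auxiliary identity is
$$\int_{\langle u\rangle^{\perp}\cap S^{2}} Y_n(v)\,d\ell(v) = 2\pi P_n(0)\,Y_n(u),$$
valid for every spherical harmonic $Y_n$ of degree $n$ (with $P_n(0)=0$ for $n$ odd); this I would derive from Theorem~\ref{FH} by approximating a Dirac mass at $t=0$ with bounded even kernels $F_\varepsilon(t)=(2\varepsilon)^{-1}\chi_{[-\varepsilon,\varepsilon]}(t)$ and letting $\varepsilon\to 0$. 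Expanding $p=\sum_n\pi_n(p)$ via \eqref{1607} gives $L_u=\sum_{k\ge 0}2\pi P_{2k}(0)\,\pi_{2k}(p)(u)$, and squaring and integrating over $S^{2}$, the mutual orthogonality of spherical harmonics of different degrees yields
$$\int_{S^{2}}L_u^{2}\,du = 4\pi^{2}\sum_{k\ge 0} P_{2k}(0)^{2}\,\lVert \pi_{2k}(p)\rVert^{2}.$$
The $k=0$ term equals $\pi M^{2}$ (using $\pi_{0}(p)=M/(4\pi)$ and $\lVert 1\rVert^{2}=4\pi$). For $k\ge 1$, the classical identity $P_{2k}(0)=(-1)^{k}\Gamma(k+1/2)/(\sqrt{\pi}\,\Gamma(k+1))$ turns $4\pi^{2}P_{2k}(0)^{2}$ into $4\pi\,\Gamma(k+1/2)^{2}/\Gamma(k+1)^{2}$, which is exactly the coefficient in (i).

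Part (ii) then follows immediately by inserting (i) into \eqref{pepitogrillo} and regrouping constants. For the equality assertion, every coefficient $\Gamma(k+1/2)^{2}/\Gamma(k+1)^{2}$ is strictly positive, so the tail series appearing in (i) and (ii) vanish simultaneously; this occurs iff $\pi_{2k}(p)=0$ for every $k\ge 1$, which by Section~2 is precisely the condition that $K$ has constant width. Hence the equality cases of \eqref{desblasc} and \eqref{mars3} coincide and are characterized by constant width.

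The main obstacle is the rigorous justification of the Funk-transform eigenvalue identity, since Theorem~\ref{FH} as stated requires bounded measurable kernels rather than a Dirac mass. The approximation by $F_\varepsilon$ above handles this cleanly: Funk--Hecke produces coefficients $\lambda_n^{(\varepsilon)}=2\pi\int_{-1}^{1}F_\varepsilon(t)P_n(t)\,dt\to 2\pi P_n(0)$ by continuity of $P_n$, while on the geometric side $\int_{S^{2}}F_\varepsilon(\langle u,v\rangle)Y_n(v)\,dv$ converges to $\int_{\langle u\rangle^{\perp}\cap S^{2}}Y_n(v)\,d\ell(v)$ by continuity of $Y_n$ on $S^{2}$. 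Alternatively, one may simply cite the classical Funk--Radon theory on the sphere.
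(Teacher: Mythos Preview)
Your argument is correct, but it proceeds in the opposite order from the paper and uses a different toolbox. The paper proves (ii) first, by applying its own machinery of invariant measures on pairs of planes: it takes $f(t)=1/\sqrt{1-t^{2}}$, so that $H''(x)=f(\cos x)\sin^{2}x=|\sin x|$ and $H(x)=|x|-|\sin x|$; then Theorem~\ref{1310c} converts the line integral $\int_{G\cap K=\emptyset}(\omega-\sin\omega)\,dG$ into a pair-of-planes integral, and Theorem~\ref{2909b} evaluates the latter in terms of $\lVert\pi_{2n}(p)\rVert^{2}$ with coefficients $\lambda_{2n}=2\pi\int_{-1}^{1}(1-t^{2})^{-1/2}P_{2n}(t)\,dt=2\pi\,\Gamma(n+1/2)^{2}/\Gamma(n+1)^{2}$. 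Part~(i) is then deduced from (ii) via Blaschke's identity~\eqref{pepitogrillo}. You instead prove (i) directly by recognizing $L_{u}$ as the spherical Funk transform of $p$, diagonalizing it on spherical harmonics with eigenvalues $2\pi P_{2k}(0)$, and applying Parseval; then you derive (ii) from (i) and~\eqref{pepitogrillo}. Your route is more elementary and self-contained, bypassing the pair-of-planes theory entirely, and it neatly explains the coefficients $\Gamma(n+1/2)^{2}/\Gamma(n+1)^{2}$ as $\pi P_{2n}(0)^{2}$. The paper's route, on the other hand, exhibits the theorem as a concrete instance of the general framework built in Sections~4--5, which is its real purpose in context. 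Your approximation of the great-circle Dirac by the kernels $F_{\varepsilon}$ is fine; one could equally cite the classical Funk formula directly, and the $C^{2}$ hypothesis on $p$ guarantees uniform convergence of~\eqref{1607}, so the termwise manipulations are legitimate.
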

\begin{proof}
We consider  $f(t)=1/\sqrt{1-t^{2}}$. For this function the corresponding  $H$ in Theorem \ref{1310c}  is $H(x)=|x|-|\sin x|.$ Applying  equality \eqref{mars} and Theorem \ref{2909b} with the corresponding $\lambda_{2n}$'s given by 
$$
\lambda_{2n}=2\pi \int_{-1}^{1}f(t)P_{2n}(t)dt= 2\pi{\Gamma(n+1/2)^{2}\over \Gamma(n+1)^{2}},
$$
(cf. \cite{grad}, {\bf 7.226}), item ii)  follows. Equality i) is a consequence of ii) and \eqref{pepitogrillo}. 

The statement about equality in \eqref{desblasc} and \eqref{mars3} is a consequence of the fact that $K$ is of constant width if and only if $\pi_{2n}(p)=0$ for $n\not =0.$
%
 \end{proof}

\section{A formulation with Fourier series}
In this section we give an alternative formulation of Theorem \ref{1310c} in terms of Fourier coefficients of the function $H''(x)$.  Since $f$ is even one has that $H''(x)=f(\cos(x))\sin^2(x)$ is an even  $\pi$-periodic  function. Let
\begin{equation}\label{coeffa2n}
H''(x)=\frac12 a_{0}+\sum_{n\geq 1}a_{2n}\cos(2nx)
\end{equation} 
be the Fourier expansion of $H''(x).$ Integrating twice and taking into account that  $H(0)=H'(0)=0$ one obtains
\begin{eqnarray}\label{1510d}H(x)=\frac{a_{0}}{4}x^{2}+\sum_{n\geq 1}\frac{a_{2n}}{4n^{2}}(1-\cos(2nx)).\end{eqnarray}
Using this expression of the function $H$, Theorem \ref{1310c} can be written 
as
\begin{prop}Let $K$ be a compact convex set  and  let $f:[-1,1]\longrightarrow \R$ be an even continuous function. Let $H$ be the ${\mathcal C}^{2}$ function on $[-\pi,\pi]$
 satisfying 
\begin{eqnarray}\label{1510q}H''(x)=f(\cos(x))\sin^{2}(x),\quad -\pi\leq x\leq\pi,  \quad  H(0)=H'(0)=0.\end{eqnarray}
If $H(x)$ is given by \eqref{1510d} then 
\begin{multline}\label{1310bb}
\int_{E_{i}\cap K\neq\emptyset}f(\langle u_{1},u_{2} \rangle)\,dE_{1}\,dE_{2}=\\ =
\frac{a_{0}}{4}\pi^{3}F +
\frac12\int_{G\cap K=\emptyset} \bigg({a_{0}}\omega^{2}+\sum_{n\geq 1}\frac{a_{2n}}{n^{2}}(1-\cos(2n\omega))\bigg)\,dG,
\end{multline}
where $u_{i}$ are  normal vectors   to the planes $E_{i}$, the visual angle from the  line $G$ is $\omega$  and $F$ denotes the area of the boundary of $K$.  
\end{prop}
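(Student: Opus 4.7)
The proposition is essentially a translation of Theorem \ref{1310c} into the language of Fourier coefficients of $H''$, so the plan splits into two parts: first establish the explicit formula \eqref{1510d} for $H$, then substitute into \eqref{mars} and read off \eqref{1310bb}.

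For the first part, I would begin by observing that $H''(x)=f(\cos x)\sin^{2}x$ is continuous, even, and $\pi$-periodic: evenness is clear, and periodicity uses $\cos(x+\pi)=-\cos x$ together with the evenness of $f$, combined with $\sin^{2}(x+\pi)=\sin^{2}x$. Hence the Fourier series of $H''$ contains only cosines of even order, which is the content of \eqref{coeffa2n}. The next step is to integrate this series twice. Term-by-term integration of the Fourier series of a continuous function yields a uniformly convergent series for an antiderivative (this is classical, since integration improves decay of the coefficients by a factor of $1/n$). Imposing $H'(0)=0$ gives
\begin{equation*}
H'(x)=\frac{a_{0}}{2}x+\sum_{n\geq 1}\frac{a_{2n}}{2n}\sin(2nx),
\end{equation*}
and a second integration, now imposing $H(0)=0$, produces \eqref{1510d} after noting that $\int_{0}^{x}\sin(2nt)\,dt=(1-\cos(2nx))/(2n)$.

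For the second part, I plug \eqref{1510d} into the conclusion of Theorem \ref{1310c}. Since $\cos(2n\pi)=1$ for every $n\geq 1$, the series contribution to $H(\pi)$ vanishes and we obtain $H(\pi)=a_{0}\pi^{2}/4$, so the boundary term becomes $\pi H(\pi)F=(a_{0}/4)\pi^{3}F$. For the line integral, substituting the series for $H(\omega)$ and factoring the coefficient $2$ gives
\begin{equation*}
2H(\omega)=\frac{a_{0}}{2}\omega^{2}+\sum_{n\geq 1}\frac{a_{2n}}{2n^{2}}(1-\cos(2n\omega)),
\end{equation*}
which matches the integrand of \eqref{1310bb} after pulling out the outer $1/2$. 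Reassembling the two contributions yields the claimed identity.

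The only technical point that needs a brief comment is the exchange of the sum with the integral over $\{G\cap K=\emptyset\}$. Since $|a_{2n}|\leq \|H''\|_{\infty}$ and $\sum 1/n^{2}<\infty$, the Weierstrass M-test gives uniform convergence of the series for $H$ on $[-\pi,\pi]$, so $H(\omega)$ is bounded. The finiteness of $\int_{G\cap K=\emptyset}H(\omega)\,dG$ is already guaranteed by Theorem \ref{1310c}; combining this with uniform convergence on the compact range of $\omega$ justifies the termwise integration by dominated convergence. There is no substantive obstacle in the argument; the whole proposition is a bookkeeping restatement of \eqref{mars} once the elementary Fourier computation for $H$ is carried out.
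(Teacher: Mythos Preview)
Your proof is correct and follows exactly the route the paper takes: derive \eqref{1510d} by integrating the Fourier expansion of $H''$ twice under the initial conditions, then substitute into \eqref{mars} from Theorem~\ref{1310c}, using $H(\pi)=a_{0}\pi^{2}/4$. In fact the paper does not even spell out a separate proof of this proposition; it simply records \eqref{1510d} and states that Theorem~\ref{1310c} ``can be written as'' \eqref{1310bb}, so your write-up is, if anything, more detailed than the original (your closing paragraph on exchanging sum and integral is harmless but unnecessary, since the series in \eqref{1310bb} sits inside the integral and is just the displayed form of $2H(\omega)$).
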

The right hand side of  \eqref{1310bb} can be written as a linear combination of integrals of even powers of $\sin\omega$. For this purpose we will use Blaschke formula \eqref{2503} and the known equality 
\begin{equation}\label{cos2n}
\cos 2n x=\sum_{m=0}^{n}\alpha_{n,m}\sin^{2m}x\quad \mathrm{with}
\quad
\alpha_{n,m}=\frac{(-1)^{m}\,n\,2^{2m}(n+m-1)!}{(2m)!(n-m)!},
\end{equation}
which follows easily from the equality $\cos(2nx)=(-1)^{n}T_{2n}(\sin x)$ where $T_{2n}$ is Chebyshev's polynomial of degree $2n$. We can state
\begin{prop}\label{intsinus2}
Let $K$ be a compact convex set  and  let $f:[-1,1]\longrightarrow \R$ be an even continuous function. Let $H$ be the  ${\mathcal C}^{2}$ function on $[-\pi,\pi]$
 satisfying 
\begin{eqnarray*}H''(x)=f(\cos(x))\sin^{2}(x), \quad -\pi\leq x\leq \pi, \quad  H(0)=H'(0)=0.\end{eqnarray*}
If $H(x)$ is given by \eqref{1510d} then 
\begin{multline}\label{intfpotsinus}
\int_{E_{i}\cap K\neq\emptyset}f(\langle u_{1},u_{2} \rangle)\,dE_{1}\,dE_{2}=\\ =
a_{0}M^{2}- 
\frac12\sum_{m=2}^{\infty}\left(\sum_{n=m}^{\infty}{a_{2n}\over n^{2}}\alpha_{n,m}\int_{G\cap K=\emptyset} \sin^{2m}\omega\, dG\right),
\end{multline}
where $u_{i}$ are  normal vectors   to the planes $E_{i}$, the visual angle from the  line $G$ is $\omega$, $F$ denotes the area of the boundary of $K$, the coefficients $\alpha_{n,m}$ are given by \eqref{cos2n} and the coefficients $a_{2n}$ by \eqref{coeffa2n}.
\end{prop}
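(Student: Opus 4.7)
The starting point is equation \eqref{1310bb} from the previous proposition, which already expresses the left hand side as an explicit linear combination involving $\int_{G\cap K=\emptyset}\omega^{2}\,dG$ and the integrals $\int_{G\cap K=\emptyset}\cos(2n\omega)\,dG$. The plan is to convert every occurrence of $\omega$ into even powers of $\sin\omega$ using two ingredients: Blaschke's formula \eqref{2503} to handle the $\omega^{2}$ term and the Chebyshev identity \eqref{cos2n} to handle the $\cos(2n\omega)$ terms. The final nontrivial step is to show that the coefficient of $\int_{G\cap K=\emptyset}\sin^{2}\omega\,dG$ vanishes, which is why the outer sum in \eqref{intfpotsinus} starts at $m=2$.

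First I would apply Blaschke's identity \eqref{2503} in the form
\[
\int_{G\cap K=\emptyset}\omega^{2}\,dG = 2M^{2} - \tfrac{\pi^{3}}{2}F + \int_{G\cap K=\emptyset}\sin^{2}\omega\,dG,
\]
and substitute this into \eqref{1310bb}. The term $\tfrac{a_{0}}{4}\pi^{3}F$ from \eqref{1310bb} exactly cancels the contribution $-\tfrac{a_{0}\pi^{3}}{4}F$, and one is left with the clean mean curvature term $a_{0}M^{2}$ together with an extra $\tfrac{a_{0}}{2}\int_{G\cap K=\emptyset}\sin^{2}\omega\,dG$ plus the Fourier-harmonic sum.

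Next I would use \eqref{cos2n}. Since $\alpha_{n,0}=1$, one has $1-\cos(2n\omega)=-\sum_{m=1}^{n}\alpha_{n,m}\sin^{2m}\omega$, so that interchanging the order of summation (justified since $f$ is continuous, the Fourier series of the smooth function $H''$ converges absolutely, and for fixed $\omega$ only finitely many $m$ contribute to each $n$) produces
\[
\frac{1}{2}\sum_{n\geq 1}\frac{a_{2n}}{n^{2}}\int_{G\cap K=\emptyset}\!(1-\cos(2n\omega))\,dG
=-\frac{1}{2}\sum_{m\geq 1}\Bigl(\sum_{n\geq m}\frac{a_{2n}}{n^{2}}\alpha_{n,m}\Bigr)\int_{G\cap K=\emptyset}\!\sin^{2m}\omega\,dG.
\]
Together with the residual $\tfrac{a_{0}}{2}\int\sin^{2}\omega\,dG$ from the previous step, the overall coefficient of $\int_{G\cap K=\emptyset}\sin^{2}\omega\,dG$ becomes $\tfrac{a_{0}}{2}-\tfrac{1}{2}\sum_{n\geq 1}\tfrac{a_{2n}}{n^{2}}\alpha_{n,1}$.

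The core (and most delicate) step is to show that this coefficient vanishes. A direct evaluation from \eqref{cos2n} gives $\alpha_{n,1}=-2n^{2}$, so the coefficient equals $\tfrac{1}{2}\bigl(a_{0}+2\sum_{n\geq 1}a_{2n}\bigr)$. But by \eqref{coeffa2n}, $H''(0)=\tfrac{1}{2}a_{0}+\sum_{n\geq 1}a_{2n}$, and \eqref{1510q} forces $H''(0)=f(1)\sin^{2}(0)=0$; hence the bracket vanishes. Thus the $m=1$ term drops out and the sum effectively runs from $m=2$, yielding exactly \eqref{intfpotsinus}. The main obstacle is precisely this identification: one must notice that the cancellation of the $\sin^{2}\omega$ coefficient is not a coincidence but a direct consequence of $H''(0)=0$, together with the combinatorial evaluation $\alpha_{n,1}=-2n^{2}$.
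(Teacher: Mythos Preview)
Your proof is correct and follows essentially the same approach as the paper: both use the Chebyshev identity \eqref{cos2n} together with $\alpha_{n,1}=-2n^{2}$ and the relation $a_{0}+2\sum_{n\geq1}a_{2n}=0$ (coming from $H''(0)=0$) to eliminate the $\sin^{2}\omega$ term, and Blaschke's formula \eqref{2503} to produce the $a_{0}M^{2}$ term. The only cosmetic difference is the order of operations---the paper first groups $\omega^{2}$ with $\sin^{2}\omega$ and then applies Blaschke to $\omega^{2}-\sin^{2}\omega$, whereas you apply Blaschke first and then verify the cancellation---but the substance is identical.
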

\begin{proof}
Using \eqref{cos2n} the right hand side of \eqref{1310bb} is written as
\begin{multline*}
\frac{a_{0}\pi^{3}}{4}F +
\frac12\int_{G\cap K=\emptyset} \bigg({a_{0}}\omega^{2}-\sum_{n=1}^{\infty}{a_{2n}\over n^{2}}\alpha_{n,1}\sin^{2}\omega 
-\sum_{n=2}^{\infty}{a_{2n}\over n^{2}}\sum_{m=2}^{n}\alpha_{n,m}\sin^{2m}\omega
\bigg)\,dG =\\ =
\frac{a_{0}\pi^{3}}{4}F +
\frac12\int_{G\cap K=\emptyset} \bigg({a_{0}}(\omega^{2}-\sin^{2}\omega)
-\sum_{n=2}^{\infty}{a_{2n}\over n^{2}}\sum_{m=2}^{n}\alpha_{n,m}\sin^{2m}\omega
\bigg)\,dG
\end{multline*}
where we have used that $\alpha_{n,0}=1, \alpha_{n,1}=-2n^{2} $ and $a_{0}=-2\sum_{n=1}^{\infty}a_{2n}$ which is a consequence of the fact that $H''(0)=0.$ Using Blaschke formula \eqref{2503} and reordering the double sum  the result follows.
\end{proof}

\subsection{A basis for the integrals of invariant measures}

As a consequence of Proposition \ref{intsinus2} we can exhibit
 a simple family of polynomial functions that are in some sense a basis for  the integrals  in Theorem \ref{2909b}.
Consider the polynomials \begin{eqnarray}\label{2403}h_{m}(t)=m(2mt^{2}-1)(1-t^{2})^{m-2},\ m> 1.\end{eqnarray}
Then for $H''(x)=h_{m}(\cos(x))\sin^{2}(x)$
one easily checks
that
$H(\omega)=\frac{1}{2}\sin^{2m}\omega$
and  Theorem \ref{1310c} applied to $h_{m}(t)$ gives
$$\int_{E_{i}\cap K\neq\emptyset}h_{m}(\langle u_{1},u_{2} \rangle)\,dE_{1}\,dE_{2}=\int_{G\cap K=\emptyset}\sin^{2m}\omega\, dG,$$ 
that together with equation \eqref{intfpotsinus}
leads to the following
\begin{prop}\label{2403b}Under the same hypotheses and notation as  in Proposition \ref{intsinus2} one has
\begin{multline*}
\int_{E_{i}\cap K\neq\emptyset}f(\langle u_{1},u_{2} \rangle)\,dE_{1}\,dE_{2}=\\ =
a_{0}M^{2}- 
\frac12\sum_{m=2}^{\infty}\left(\sum_{n=m}^{\infty}{a_{2n}\over n^{2}}\alpha_{n,m}\int_{E_{i}\cap K\neq\emptyset} h_{m}(\langle u_{1},u_{2}\rangle)\,dE_{1}\,dE_{2}\right),
\end{multline*}
where the polynomials $h_{m}$ are given in \eqref{2403}.
\end{prop}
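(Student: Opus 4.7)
The plan is to observe that Proposition \ref{2403b} is a direct substitution: once we establish the identity
$$\int_{E_i\cap K\neq\emptyset} h_m(\langle u_1,u_2\rangle)\,dE_1\,dE_2 = \int_{G\cap K=\emptyset}\sin^{2m}\omega\,dG,$$
we plug it into the right-hand side of Proposition \ref{intsinus2} and the claim follows. So the only real work is to verify this identity, which amounts to checking that, for $m\geq 2$, the function $H_m(x)=\tfrac12\sin^{2m}x$ is precisely the solution to the Cauchy problem \eqref{1510q} corresponding to $f=h_m$.

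\textbf{Step 1 (verification of $H_m$).} I would compute $H_m'(x)=m\sin^{2m-1}x\cos x$ and
$$H_m''(x) = m(2m-1)\sin^{2m-2}x\cos^2 x - m\sin^{2m}x = m\sin^{2m-2}x\bigl(2m\cos^2 x - 1\bigr),$$
where the last equality uses $\sin^2 x=1-\cos^2 x$. Writing $\sin^{2m-2}x=\sin^2 x\cdot(1-\cos^2 x)^{m-2}$, this becomes exactly $h_m(\cos x)\sin^2 x$. The initial conditions $H_m(0)=0$ and $H_m'(0)=0$ (valid for $m\geq 2$) are immediate from the formulas for $H_m$ and $H_m'$, so $H_m$ is the unique $\mathcal C^2$ solution of the Cauchy problem attached to $h_m$.

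\textbf{Step 2 (applying Theorem \ref{1310c}).} With $H_m(\pi)=\tfrac12\sin^{2m}(\pi)=0$, Theorem \ref{1310c} specialized to $f=h_m$ gives
$$\int_{E_i\cap K\neq\emptyset} h_m(\langle u_1,u_2\rangle)\,dE_1\,dE_2 = \pi H_m(\pi) F + 2\int_{G\cap K=\emptyset} H_m(\omega)\,dG = \int_{G\cap K=\emptyset}\sin^{2m}\omega\,dG,$$
which is precisely the desired identity.

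\textbf{Step 3 (substitution).} Replacing each $\int_{G\cap K=\emptyset}\sin^{2m}\omega\,dG$ in the right-hand side of the formula in Proposition \ref{intsinus2} by $\int_{E_i\cap K\neq\emptyset}h_m(\langle u_1,u_2\rangle)\,dE_1\,dE_2$ yields the statement of Proposition \ref{2403b}. No interchange of summation needs to be justified beyond what was already used in Proposition \ref{intsinus2}, so there is no genuine obstacle: the only non-trivial point is the antiderivative identity in Step 1, which is just elementary calculus once one recognizes that $h_m(\cos x)\sin^2 x$ is the second derivative of $\tfrac12\sin^{2m}x$.
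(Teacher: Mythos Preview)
Your proof is correct and follows exactly the route of the paper: establish the identity $\int_{E_i\cap K\neq\emptyset} h_m(\langle u_1,u_2\rangle)\,dE_1\,dE_2 = \int_{G\cap K=\emptyset}\sin^{2m}\omega\,dG$ via Theorem \ref{1310c} with $H(x)=\tfrac12\sin^{2m}x$, and then substitute into Proposition \ref{intsinus2}. The only difference is that you explicitly carry out the derivative computation that the paper condenses into ``one easily checks.''
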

So every invariant integral can be written as an infinite linear combination of the integrals of the invariant measures given by  the polynomials $h_{m}$.
\section{Powers of sine function of the visual angle}
Equality \eqref{intfpotsinus} suggests to consider integrals of the form $$\int_{G\cap K\neq \emptyset}\sin^{2m}\omega\, dG.$$
Integrals of the power of the sine of the visual angle $\omega$ for a compact convex set $K$ in the plane  were  considered in  \cite{Cufi2019} obtaining
$$
\int_{P\not \in K}\sin^{2m}\omega\ dP=\lambda_{0}L^{2}+\sum_{\substack {k\geq 2,\\ \mathrm{even}}}\lambda_{k}c_{k}^{2},
$$
for convenient $\lambda_{k}$ and where $c_{k}$ depends on the support function of $K$. Applying this formula to the projections $K_{u}$ of a compact convex set $K$ in the euclidean space on the plane $\langle u\rangle^{\perp}$ and taking into account that $dG=dP\wedge du$ and $\omega_{u}(P)=\omega(G)$ one gets 
\begin{multline*}
\int_{G\cap K=\emptyset} \sin^{2m}\omega\,dG=
\frac12\int_{u\in S^{2}}\int_{P\not\in K_{u}}\sin^{2m}\omega_{u}\,dP\, du=\\
=\frac{\lambda_{0}}2\int_{u\in S^{2}}L_{u}^{2}du+\sum_{\substack {k\geq 2,\\ \mathrm{even}}}{\lambda_{k}\over 2}\int_{u\in S^{2}}c_{k,u}^{2}du.
\end{multline*}
The first integral  in the right hand side was geometrically interpretated in Theorem~\ref{1903} but the second one has not been handled.
\smallskip	

A more direct way to deal with the integral of the power of the sine of the visual angle for a compact convex set $K$ in the euclidean space is by means of  our previous results. In fact, we have the following
\begin{prop}\label{potsin}
Let $K$ be a compact convex set with   support function $p$  
given in terms of spherical  harmonics by \eqref{1607} then
\begin{equation}\label{sinpow}
\int_{G\cap K=\emptyset} \sin^{2m}\omega\,dG={m\sqrt{\pi}(m-2)!\over 4\Gamma(m+\frac12)}M^{2}+\sum_{k=1}^{m-1}\beta_{2k}\lVert\pi_{2k}(p)\rVert^{2},\quad m>1,
\end{equation}
where $M$ is the mean curvature of $K$ and the $\beta_{2k}$'s are given by
\begin{equation}\label{lambdasub2k}
\beta_{2k}=(-1)^{k+1}	{m(m-2)!^{2}\Gamma(k+1/2)\big((2m-1)(2k-1)(k+1)+m\big)\over
k!(m-k-1)!\Gamma(m+k+1/2)
}\pi.
\end{equation}
\end{prop}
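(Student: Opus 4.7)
The plan is to derive Proposition \ref{potsin} as a direct application of Theorem \ref{2909b} to the polynomial $h_m$ introduced in Section~6. By construction, for $f(t)=h_m(t)$ the associated function appearing in Theorem \ref{1310c} is $H(\omega)=\tfrac12\sin^{2m}\omega$, which vanishes at both $0$ and $\pi$. Consequently the boundary term $\pi H(\pi)F$ in \eqref{mars} drops out and Theorem \ref{1310c} yields
$$\int_{G\cap K=\emptyset}\sin^{2m}\omega\,dG=\int_{E_{i}\cap K\neq\emptyset}h_{m}(\langle u_{1},u_{2}\rangle)\,dE_{1}\,dE_{2}.$$

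The right-hand side is now accessible through Theorem \ref{2909b}. Since $h_m$ is an even polynomial of degree $2(m-1)$, orthogonality of the Legendre polynomials forces $\lambda_n=0$ for $n$ odd and for $n\ge 2m$, so the general expansion \eqref{2909c} truncates into a finite sum indexed by $k=0,1,\dots,m-1$. This already matches the shape of \eqref{sinpow}; what remains is to identify $\lambda_0/(4\pi)$ with the prefactor of $M^{2}$ and $\lambda_{2k}$ with $\beta_{2k}$ from \eqref{lambdasub2k}.

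The evaluation of $\lambda_0=2\pi\int_{-1}^{1}h_m(t)\,dt$ is routine: splitting $h_m(t)=2m^{2}t^{2}(1-t^{2})^{m-2}-m(1-t^{2})^{m-2}$ and using the Beta identities
$$\int_{-1}^{1}(1-t^{2})^{m-2}dt=\frac{\sqrt{\pi}\,(m-2)!}{\Gamma(m-\tfrac12)},\qquad \int_{-1}^{1}t^{2}(1-t^{2})^{m-2}dt=\frac{\sqrt{\pi}\,(m-2)!}{2\,\Gamma(m+\tfrac12)},$$
together with $\Gamma(m+\tfrac12)=(m-\tfrac12)\Gamma(m-\tfrac12)$, collapses the two contributions to $\lambda_0=\pi^{3/2}\,m\,(m-2)!/\Gamma(m+\tfrac12)$, giving exactly the coefficient of $M^{2}$ in \eqref{sinpow}.

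The main obstacle is computing $\lambda_{2k}=2\pi\int_{-1}^{1}h_{m}(t)P_{2k}(t)\,dt$ for $1\le k\le m-1$ in a form that can be massaged into \eqref{lambdasub2k}. The route I would follow uses Rodrigues' formula $P_{2k}(t)=\frac{1}{2^{2k}(2k)!}\frac{d^{2k}}{dt^{2k}}(t^{2}-1)^{2k}$ combined with $2k$ successive integrations by parts; the boundary contributions vanish because $(t^{2}-1)^{2k}$ has a zero of order $2k$ at $\pm 1$. This converts each of the two pieces $2m^{2}t^{2}(1-t^{2})^{m-2}$ and $-m(1-t^{2})^{m-2}$ into an integral of the form $\int_{-1}^{1}(1-t^{2})^{2k}\,Q(t)\,dt$ with $Q$ an explicit polynomial, reducing everything to Beta integrals. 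The composite factor $(2m-1)(2k-1)(k+1)+m$ in \eqref{lambdasub2k} will emerge when the two contributions are added and the gamma ratios collected via $\Gamma(k+\tfrac12)=\frac{(2k)!\sqrt{\pi}}{4^{k}k!}$ and $\Gamma(m+k+\tfrac12)=(m+k-\tfrac12)(m+k-\tfrac32)\cdots(m-\tfrac12)\Gamma(m-\tfrac12)$. This last step is essentially bookkeeping of gamma functions, but it is where the computation is most delicate; a useful sanity check along the way is the case $m=2,\,k=1$, which must reproduce the constant $64\pi/15$ already obtained in Example \ref{sin4}.
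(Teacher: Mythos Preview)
Your proposal is correct and follows essentially the same path as the paper: apply Theorem \ref{1310c} with $f=h_m$ so that $H(\omega)=\tfrac12\sin^{2m}\omega$ and $H(\pi)=0$, then feed the result into Theorem \ref{2909b}, using that $h_m$ is an even polynomial of degree $2(m-1)$ to truncate the sum.

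The only divergence is in how the Legendre coefficients $\beta_{2k}=2\pi\int_{-1}^{1}h_m(t)P_{2k}(t)\,dt$ are evaluated. The paper expands $P_{2k}$ explicitly as a sum of even monomials, computes each moment $\int_{-1}^{1}h_m(t)t^{2j}\,dt$ in closed form, and then sums the resulting expression with the aid of a computer algebra system before simplifying via $\Gamma(\tfrac12+j)\Gamma(\tfrac12-j)=(-1)^j\pi$. You instead propose Rodrigues' formula and $2k$ integrations by parts, reducing to $\int_{-1}^{1}(1-t^2)^{2k}h_m^{(2k)}(t)\,dt$ and hence to Beta integrals. Both routes are valid; yours is more self-contained and avoids the CAS, at the cost of having to differentiate $h_m$ repeatedly and carry somewhat heavier gamma bookkeeping by hand, whereas the paper's method keeps the integrand simple but offloads the summation to a machine.
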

\begin{rem}
When $K$ is a convex set of constant width one has $\lVert\pi_{2k}(p)\rVert^{2}=0$ and so
\begin{equation*}
\int_{G\cap K=\emptyset} \sin^{2m}\omega\,dG={m\sqrt{\pi}(m-2)!\over 4\Gamma(m+\frac12)}M^{2}.
\end{equation*}
\end{rem} 
\begin{proof}
Comparing relations  \eqref{2909b} and  \eqref{mars} we have
$$
{\beta_{0}\over 4\pi}M^{2}+\sum_{k=1}^{\infty}\beta_{2k}\lVert \pi_{2k}(p)\rVert ^{2}={\pi}H(\pi)F+
2\int_{G\cap K=\emptyset} H(\omega)\,dG
$$
where $H''(x)\!=\!f(\cos x)	\sin^{2}x$ with $H(0)\!=\!H'(0)=0$ and $\beta_{2k}\!=\!2\pi\int_{-1}^{1}f(t)P_{2k}(t)\,dt.$
As said above, the function $H$ corresponding to the 
 polynomials $h_{m}$ given in \eqref{2403}
is  $H(\omega)=\frac12\sin^{2m}\omega$. 
In this case we have that $\beta_{2k}=0$ for $k>m-1$ because $h_{m}(t)$ is a polynomial of degre $2(m-1)$ and $\int_{-1}^{1}h_{m}(t)P_{2k}(t)=0$ for $k>m-1$. 
In order to compute 
$$\beta_{2k}=2\pi\int_{-1}^{1}h_{m}(x)P_{2k}(x)dx$$
we first observe that, using a Computer algebra system (CAS)
we get
\begin{eqnarray}\label{0101}
\int_{-1}^{1}h_{m}(x)x^{2j}dx=
{m(4mj-2j+1)\Gamma(j+1/2)\Gamma(m-1) \over 2 \Gamma(m+j+1/2)}.
\end{eqnarray}
%
Now, recalling the expression of the Legendre polynomials 
$$P_{2k}(x)=\frac{1}{2^{2k}}\sum_{r=0}^{k}(-1)^{r}{2k\choose r} {4k-2r\choose 2k} x^{2(k-r)}$$
we have  
\begin{multline*}
 \int_{-1}^{1}h_{m}(x)P_{2k}(x)dx\ =
\\=\frac{m\Gamma(m-1)}{2^{2k+1}}\sum_{r=0}^{k}(-1)^{r}\!{2k\choose r} 
\!{4k-2r\choose 2k}\frac{\Gamma(k\!-\!r+1/2)(4m(k\!-\!r)\!-\!2(k-r)\!+\!1)}{\Gamma(m+k-r+1/2)}, 
\end{multline*}
but this sum is computable,  using again a CAS we have 
\begin{multline*}
 \int_{-1}^{1}h_{m}(x)P_{2k}(x)dx\ =
\\  =
\frac{1}{2^{2k+1}}m\Gamma(m-1)\bigg( \Gamma(m-1)\Gamma(1/2-2k)\Gamma(k-1/2)\cdot\\ \cdot
\bigg[(m-1)(2k-1){4k\choose 2k}((4m-2)k+1)-2(2m-1)k(4k-1){4k-2\choose 2k}(2m+2k-1) \bigg]\cdot\\ \cdot \frac{1}{2\Gamma(1/2-k)\Gamma(m-k)\Gamma(m+k+1/2)}
 \bigg).
\end{multline*}
Simplifyng this expression and taking into account that 
$\Gamma(1/2+j)\Gamma(1/2-j)=(-1)^{j}\pi$
we finally obtain
\begin{eqnarray*}
\beta_{2k}&=&2\pi\int_{-1}^{1}h_{m}(x)P_{2k}(x)dx  \\
&=&(-1)^{k+1}\frac{m(m-2)!^{2}\Gamma(k+1/2)\big((2m-1)(2k-1)(k+1)+m\big)}
{k!(m-k-1)!\Gamma(m+k+1/2)}\pi.
\end{eqnarray*}

\end{proof}

\bibliographystyle{plain}
\bibliography{CGRdP}

\end{document}